\newtheorem{theorem}{Theorem}[section]
\newtheorem{lemma}[theorem]{Lemma}
\newtheorem{proposition}[theorem]{Proposition}
\newtheorem{corollary}[theorem]{Corollary}
\theoremstyle{definition}
\newtheorem{definition}[theorem]{Definition}
\newtheorem{remark}[theorem]{Remark}
\newtheorem{question}[theorem]{Question}
\DeclareMathOperator*{\sotlim}{{\mbox{\scshape sot}}-lim}
\DeclareMathOperator{\ran}{Ran}
\newcommand\supp{\mathop{\rm supp}}
\let\spn\sspp
\newcommand\nph{\varphi}
\newcommand\al{\alpha}
\newcommand\ocl{\mathop{{\rm cl}_{\omega}}}
\newcommand\ointer{\mathop{{\rm int}_{\omega}}}
\newcommand\sqinter{\mathop{{\rm sqint}_{\omega}}}
\newcommand\cb{\mathop{\rm cb}}
\newcommand{\cl}[1]{\mathcal{#1}}
\newcommand{\bb}[1]{\mathbb{#1}}
\newcommand{\bN}{\bb N}
\newcommand{\B}{\cl B}
\renewcommand{\S}{\cl S}
\newcommand{\D}{\cl D}
\newcommand\esssup{\mathop{\rm esssup}}
\newcommand\Ref{\mathop{\rm Ref}}
\begin{document}

\title{Positive extensions of Schur multipliers}

\author[R.H. Levene]{Rupert H. Levene}
\address{School of Mathematics and Statistics,
  University College Dublin, Belfield, Dublin~4, Ireland}
\email{rupert.levene@ucd.ie}

\author[Y.-F. Lin]{Ying-Fen Lin}
\address{Pure Mathematics Research Centre,
  Queen's University Belfast, Belfast BT7 1NN, United Kingdom}
\email{y.lin@qub.ac.uk}

\author[I. Todorov]{Ivan G. Todorov}
\address{Pure Mathematics Research Centre,
  Queen's University Belfast, Belfast BT7 1NN, United Kingdom}
\email{i.todorov@qub.ac.uk}

\date{%
23 December 2016}

\begin{abstract}
  We introduce partially defined Schur multipliers and obtain
  necessary and sufficient conditions for the existence of extensions
  to fully defined positive Schur multipliers, in
  terms of operator systems canonically associated with their domains.
  We use these results to study the problem of extending a
  positive definite function defined on a symmetric subset of a locally compact
  group to a positive definite function defined on the whole group.
\end{abstract}

\maketitle

\section{Introduction}\label{s_intro}

The problem of completing a partially defined
matrix to a fully defined positive matrix 
has attracted considerable attention in the literature (see {\it e.g.} \cite{bn_sz, dg,gjsw,pps}).  
Given an $n$ by $n$ matrix,
only a subset of whose entries are specified, this problem asks whether the remaining
entries can be determined so as to yield a positive matrix. 
The set $\kappa$ of pairs $(i,j)$ for which the $(i,j)$-entry is specified is called the
pattern of the initial matrix; to avoid trivialities,
$\kappa$ is assumed to be symmetric and to contain the diagonal.  One
may then consider the operator system~$\cl S(\kappa)$ of all (fully
specified) matrices supported by $\kappa$.
The operator systems arising in this way are precisely the
operator subsystems of the space $M_n$ of all $n$ by $n$ complex matrices, which
are also bimodules over the algebra of all diagonal matrices. 
In~\cite{pps}, using operator space methods, Paulsen, Power and Smith
 formulated necessary and sufficient conditions for 
the existence of positive
completions for a given pattern $\kappa$, in terms of
$\cl S(\kappa)$, and related such completions to positivity and 
extendability of associated Schur multipliers with domain $\kappa$. 


In this paper, we study the corresponding extension problem in
infinite dimensions. More precisely, we replace the Hilbert space
$\bb{C}^n$ with the Hilbert space $L^2(X,\mu)$ for some
measure space~$(X,\mu)$, and the algebra of diagonal matrices with the
maximal abelian selfadjoint algebra~$\cl D \equiv L^\infty(X,\mu)$.
Given a suitable measurable subset~$\kappa\subseteq X\times X$, we define
a weak* closed $\cl D$-bimodule $\cl S(\kappa)$, canonically associated with
$\kappa$, and introduce the notion of a Schur multiplier with
domain~$\kappa$. We study the problem of extending such a (partially defined) Schur
multiplier to a positive Schur multiplier defined on all of~$X\times X$, 
and relate it to the positivity structure of $\cl S(\kappa)$.

Our motivation is two-fold. Firstly, we will see in Section \ref{s_epdf} that 
the problem we consider is closely related to 
the problem of extending partially defined positive definite functions
on locally compact groups. The latter problem has been studied  
in a variety of contexts and there is a rich bibliography 
on its modern aspects as well as its connections with classical problems and applications 
(see \cite{bn_sz}, \cite{bn}, \cite{bt}, \cite{devinatz}, \cite{g}, \cite{mcm} and the references therein). Since the main interest 
here lies in infinite groups, the passage to infinite dimensions becomes necessary. 

Secondly, bimodules over continuous maximal abelian selfadjoint algebras 
(masas, for short)
have been instrumental in a number of contexts in Operator Algebras. 
Introduced by Arveson in \cite{a}, they have proved useful in topics as diverse as 
spectral synthesis and uniqueness sets in Harmonic Analysis \cite{st}, 
closability of multipliers on Fourier algebras \cite{stt}, finite rank approximations \cite{eks}
and structure of idempotents \cite{kp}, among others. 
They are also closely related to Schur multipliers 
(see~\cite{Pa} and~\cite{Pi}, as well as \cite{naomi_tt}, where
questions related to positivity of Schur multipliers were studied).
However, masa-bimodules that are simultaneously operator systems
have not received attention to date, despite their
importance in modern Analysis \cite{Pa}. 

The paper is organised as follows. 
In Section~\ref{s_Bell2}, we consider the discrete case and 
formulate a straightforward generalisation of several results in~\cite{pps}, which use a graph
theoretic property of the pattern $\kappa$ called chordality. 
We note that extension results 
for partially defined functions that are not necessarily Schur multipliers,
in terms of chordality, were obtained in \cite{t}. 

In Section~\ref{s_pdom}, we study
measurable versions of the patterns $\kappa$ and their operator
systems.  Although these subsets $\kappa$ can be thought of as
measurable graphs, the passage from a discrete to a
general measure space leads to substantial differences (see {\it e.g.} Corollary~\ref{c_exi}).  
In Section~\ref{s_e}, we formulate necessary and sufficient conditions for the existence of 
an extension
of a partially defined partially positive Schur multiplier 
to a fully defined positive Schur multiplier, in terms of 
approximation of positive operators
by sums of rank one positive operators in the operator system $\cl S(\kappa)$.

In Section~\ref{s_epdf}, we study the problem of 
extending a positive definite function defined on a
symmetric subset $E$ of a locally compact group to a positive definite function
defined on the whole group. 
The special case where $E$ is a closed subgroup has 
attracted considerable attention previously (see {\it e.g.}~\cite{kl}).  
Closely related problems about extension of Herz-Schur multipliers were recently 
considered in~\cite{br_forr}.  
We use the results from Section~\ref{s_e}
to give, in the case of discrete amenable 
groups, a different approach to the result of Bakonyi and Timotin \cite{bt}
concerning positive definite extensions of partially defined functions. 
Our main result here (Theorem \ref{th_lcex}) concerns
(classes of non-discrete) locally compact groups, where we 
formulate a sufficient condition for the existence of 
positive definite extensions in terms of operator approximations.

\smallskip

For a Hilbert space $H$, we denote by $\cl B(H)$ (resp. $\cl K(H)$) 
the space of all bounded linear (resp. compact) 
operators on $H$.
We will often use basic concepts from Operator Space Theory, such as 
complete positivity; we refer the reader to \cite{Pa} for the necessary background. 
As customary, the closure of a subset $\cl T$ in a topology $\tau$ will be denoted by 
$\overline{\cl T}^{\tau}$, and if ${\cl T} \subseteq \cl B(H)$, then we will write ${\cl T}^+$ for 
the set of all positive elements of ${\cl T}$.

\section{The discrete case}\label{s_Bell2}

Let $X$ be a set and let $H = \ell^2(X)$.  With every element $T$
of~$\cl B(H)$, we associate the matrix~$(t_{x,y})_{x,y\in X}$ given by
$t_{x,y} = (Te_y,e_x)$, where $(e_x)_{x\in X}$ is the canonical
orthonormal basis of~$H$. 
For $x,y\in X$, denote by
$E_{x,y}$ the corresponding matrix unit in~$\cl B(H)$ (so that
$E_{x,y} e_y = e_x$).

For $\kappa\subseteq X\times X$, define
\begin{equation}\label{eq_dis}
\cl S(\kappa) = \overline{\spn\{E_{x,y} : (x,y)\in \kappa\}}^{w^*}.
\end{equation}
It is easy to see that an operator~$T\in \cl B(H)$ is in~$\cl
S(\kappa)$ if and only if the corresponding matrix~$(t_{x,y})$ has
$t_{x,y}=0$ whenever $(x,y)$ is in the complement of~$\kappa$.

Throughout this section, we fix an additional (non-trivial) Hilbert space $K$ and let 
$H\otimes K$ be the Hilbert space tensor product of $H$ and $K$. 
The elements $T\in\cl B(H\otimes K)$ can in a similar fashion be regarded as matrices
$(T_{x,y})_{x,y\in X}$, where $T_{x,y}\in \cl B(K)$ is determined by the identity
\[(T_{x,y}\xi,\eta) = (T(e_y\otimes \xi),e_x\otimes \eta),
\quad x,y\in X,\ \xi,\eta\in K.\]

\begin{definition}
  A function $\psi : \kappa\to \cl B(K)$ will be called an \emph{(operator valued)
    Schur multiplier} if the matrix $(t_{x,y}\psi(x,y))_{x,y\in X}$ defines an element 
    of $\cl B(H\otimes K)$ for every $(t_{x,y})_{x,y\in X}\in \cl S(\kappa)$
    (here, we have set $t_{x,y}\psi(x,y) = 0$ provided $(x,y)\not\in \kappa$).
\end{definition}

Let 
\[\cl S_0(\kappa) = \overline{\spn\{E_{x,y} : (x,y)\in \kappa\}}^{\|\cdot\|}.\] 
We have  that $\cl S_0(\kappa)\subseteq \cl K(H)$; let 
$\iota : \cl S_0(\kappa)\to \cl K(H)$ be the inclusion map.

\begin{lemma}\label{r_exte}
(i) The second dual $\iota^{**}$ of $\iota$ is a
completely isometric weak* homeomorphism of $\cl S_0(\kappa)^{**}$ onto $\cl S(\kappa)$.

(ii) Every bounded linear map $\Psi : \cl S_0(\kappa)\to \cl B(H\otimes K)$ has a bounded weak* continuous extension 
to a map $\tilde{\Psi} : \cl S(\kappa)\to \cl B(H\otimes K)$. 

(iii) Every bounded linear map $\Psi : \cl K(H)\to \cl B(H)$ has a bounded weak* continuous extension 
to a map $\tilde{\Psi} : \cl B(H)\to \cl B(H)$. 
Moreover, if $\Psi$ is completely positive (resp.
completely bounded) then $\tilde{\Psi}$ is completely positive (resp. completely bounded).
\end{lemma}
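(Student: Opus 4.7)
The overall strategy is the classical bidual extension technique: parts (ii) and (iii) will be deduced from (i) by exhibiting the extension as a composition of $\Psi^{**}$ with a canonical weak*-continuous projection back onto the target. All three parts reduce to standard operator-space biduality facts.

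For (i), I would invoke the general Banach-space fact that for an isometric embedding $\iota: E\hookrightarrow F$, the second adjoint $\iota^{**}: E^{**}\to F^{**}$ is an isometry onto the weak*-closure of $\iota(E)$ in $F^{**}$, and a weak*-homeomorphism onto its range. Applying this with $F=\cl K(H)$ and using the canonical identification $\cl K(H)^{**}=\cl B(H)$ (under which the bidual weak* topology becomes the usual $\sigma$-weak topology on $\cl B(H)$), the image of $\iota^{**}$ is the weak*-closure of $\cl S_0(\kappa)$ in $\cl B(H)$, which by~(\ref{eq_dis}) equals $\cl S(\kappa)$. For complete isometry, I would invoke the operator-space analogue: since $\iota$ is completely isometric (it is an inclusion of subspaces of $\cl B(H)$), so is $\iota^{**}$ with respect to the canonical operator-space structure on the biduals.

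For (ii), given bounded $\Psi: \cl S_0(\kappa)\to \cl B(H\otimes K)$, I would consider $\Psi^{**}: \cl S_0(\kappa)^{**}\to \cl B(H\otimes K)^{**}$ together with the canonical projection $P: \cl B(H\otimes K)^{**}\to \cl B(H\otimes K)$, defined as the adjoint of the inclusion of the predual $\cl B(H\otimes K)_*$ into $\cl B(H\otimes K)^*$. Both maps are weak*-continuous, and $P$ restricts to the identity on $\cl B(H\otimes K)\subseteq \cl B(H\otimes K)^{**}$. Composing with $(\iota^{**})^{-1}$ from (i) yields $\tilde\Psi:=P\circ \Psi^{**}\circ (\iota^{**})^{-1}: \cl S(\kappa)\to \cl B(H\otimes K)$; a routine check on the norm-dense subspace $\cl S_0(\kappa)$ shows $\tilde\Psi|_{\cl S_0(\kappa)}=\Psi$. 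Part (iii) is then the special case $\kappa=X\times X$ and $K=\bC$ of (ii) (so that $\cl S_0(\kappa)=\cl K(H)$ and $\cl S(\kappa)=\cl B(H)$). For preservation of complete positivity and complete boundedness, I would use that the bidual of a completely positive (respectively, completely bounded) map is again completely positive (respectively, completely bounded, with the same cb-norm), together with the fact that $P$ is a normal $\ast$-homomorphism (the projection onto the normal central summand of $\cl B(H)^{**}$), hence in particular completely positive and a complete contraction.

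I expect the main point to verify carefully will be the preservation of complete positivity in (iii), which rests on the weak*-density of positive elements of $M_n(\cl K(H))$ in the positive cone of $M_n(\cl B(H))=M_n(\cl K(H))^{**}$ at every matrix level~$n$, combined with the weak*-continuity of $\Psi^{**}$ and $P$. The remaining work in (i) and (ii) is essentially unpacking the standard bidual formalism and checking that the relevant weak* topologies identify correctly under the canonical predual isomorphisms.
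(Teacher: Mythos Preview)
Your proposal is correct and follows essentially the same route as the paper: the paper also constructs $\tilde{\Psi}=\cl E\circ\Psi^{**}\circ(\iota^{**})^{-1}$ with $\cl E$ the canonical weak*-continuous projection $\cl B(H\otimes K)^{**}\to\cl B(H\otimes K)$, and deduces (iii) from (ii) by specialising $\kappa=X\times X$, $K=\bb C$. The only cosmetic difference is that the paper justifies preservation of complete positivity by noting that $\cl E$ is unital and completely positive, whereas you invoke the stronger (but equally standard) fact that it is a normal $*$-homomorphism onto the normal summand; either suffices.
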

\begin{proof}
(i) The map $\iota^{**}$ is a surjective completely isometric weak* homeomorphism
onto its range in $\cl B(H)$; since $\overline{\cl S_0(\kappa)}^{w^*} = \cl S(\kappa)$, 
we conclude that the range of $\iota^{**}$ is $\cl S(\kappa)$. 

(ii) 
Let $\cl E : \cl B(H\otimes K)^{**}\to \cl B(H\otimes K)$ be the canonical (weak* continuous)
projection; thus, $\cl E(T) = T$ whenever $T\in \cl B(H\otimes K)$. 
Set $\tilde{\Psi} = \cl E\circ \Psi^{**} \circ (\iota^{**})^{-1}$; thus, $\tilde{\Psi} : \cl S(\kappa) \to \cl B(H\otimes K)$
is weak* continuous and its restriction to $\cl S_0(\kappa)$ coincides with $\Psi$.

(iii) The statement follows from (ii) after letting $K = \bb{C}$ and $\kappa = X\times X$. 
The remaining statements follow from the facts that $\cl E$ is unital and completely positive (and hence 
completely bounded) and that $\Psi^{**}$ is completely positive (resp. bounded) provided $\Psi$ is so. 
\end{proof}

\begin{proposition}\label{p_opval}
  Let $\kappa\subseteq X\times X$. If $\psi : \kappa\to \cl B(K)$ is a
  Schur multiplier, then the map
  \[S_{\psi} : \cl S(\kappa)\to \cl B(H\otimes K), \quad
  (t_{x,y})\mapsto \big(t_{x,y} \psi(x,y)\big)\] is bounded and weak* continuous.
\end{proposition}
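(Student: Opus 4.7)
The plan is to establish boundedness of $S_\psi$ via the closed graph theorem, then invoke Lemma~\ref{r_exte}(ii) to produce a weak* continuous extension of its restriction to $\cl S_0(\kappa)$, and finally identify that extension with $S_\psi$ itself.

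For the closed graph argument, I would first restrict to $\cl S_0(\kappa)$. Suppose $T_n \to T$ in norm in $\cl S_0(\kappa)$ and $S_\psi(T_n) \to S$ in norm in $\cl B(H\otimes K)$. The norm continuity of the matrix-entry functionals $R \mapsto \langle R e_y, e_x\rangle$ on $\cl B(H)$ and $R \mapsto \langle R(e_y\otimes\xi), e_x\otimes\eta\rangle$ on $\cl B(H\otimes K)$ gives $(T_n)_{x,y} \to T_{x,y}$ and $(S_\psi(T_n))_{x,y} = (T_n)_{x,y}\psi(x,y) \to S_{x,y}$ for all $x,y$ and $\xi,\eta$. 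Hence $S_{x,y} = T_{x,y}\psi(x,y) = (S_\psi(T))_{x,y}$, so $S = S_\psi(T)$, and the closed graph theorem yields boundedness of $\Psi := S_\psi|_{\cl S_0(\kappa)}$.

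Next, I would apply Lemma~\ref{r_exte}(ii) to $\Psi$ to obtain a bounded weak* continuous map $\widetilde\Psi : \cl S(\kappa) \to \cl B(H\otimes K)$ with $\widetilde\Psi|_{\cl S_0(\kappa)} = \Psi$. To identify $\widetilde\Psi$ with $S_\psi$, fix $T \in \cl S(\kappa)$; by Goldstine's theorem combined with Lemma~\ref{r_exte}(i), there is a bounded net $(T_\alpha)$ in $\cl S_0(\kappa)$ converging weak* to $T$. The weak* continuity of $\widetilde\Psi$ and of the rank-one functionals then yields
\[\langle \widetilde\Psi(T)(e_y\otimes\xi), e_x\otimes\eta\rangle = \lim_\alpha (T_\alpha)_{x,y}\langle\psi(x,y)\xi,\eta\rangle = T_{x,y}\langle\psi(x,y)\xi,\eta\rangle,\]
which coincides with $\langle S_\psi(T)(e_y\otimes\xi), e_x\otimes\eta\rangle$. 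Thus $\widetilde\Psi = S_\psi$, so $S_\psi$ is bounded and weak* continuous.

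The main obstacle is this last identification step: Lemma~\ref{r_exte}(ii) only asserts existence of a weak* continuous extension agreeing with $\Psi$ on the norm-dense subspace $\cl S_0(\kappa)$, and one must check that the abstract extension so obtained actually coincides with the concrete map $S_\psi$ throughout $\cl S(\kappa)$. The matrix-entry comparison through a bounded weak* approximation from $\cl S_0(\kappa)$, supplied by Goldstine's theorem via Lemma~\ref{r_exte}(i), is precisely what forces them to agree.
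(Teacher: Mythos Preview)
Your proof is correct and follows essentially the same strategy as the paper: boundedness via the closed graph theorem, then weak* continuity by extending the restriction $S_\psi|_{\cl S_0(\kappa)}$ through Lemma~\ref{r_exte}(ii) and identifying the extension with $S_\psi$ via matrix entries. The only cosmetic differences are that the paper applies the closed graph theorem directly on $\cl S(\kappa)$ rather than on $\cl S_0(\kappa)$, and for the identification step it uses the explicit finite truncations $T_F = P_F T P_F$ (with $F\subseteq X$ finite) in place of your abstract Goldstine net; neither change affects the substance of the argument.
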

\begin{proof}
  To show that $S_{\psi}$ is bounded, we use the Closed Graph Theorem. Suppose that $T_n =
  (t^n_{x,y})_{x,y\in X}$ are operators in $\cl S(\kappa)$ such that
  $T_n\to 0$ and $S_{\psi}(T_n)\to S$ in norm as $n\to \infty$, for some $S\in
  \cl B(H\otimes K)$.  Letting $S = (S_{x,y})_{x,y\in X}$, we have
  $t^n_{x,y}\to 0$ and $t^n_{x,y} \psi(x,y)\to S_{x,y}$ in norm for
  each~$(x,y)\in \kappa$. It follows that $S_{x,y} = 0$ for each
  $x,y\in X$, and hence $S = 0$. Thus, $S_{\psi}$ is bounded.

Let $S_{\psi}^0$ be the restriction of $S_{\psi}$ to the subspace $\cl S_0(\kappa)$
defined before Lemma \ref{r_exte}. Let $\Psi$ be the weak* continuous extension of $S_{\psi}^0$
guaranteed by Lemma \ref{r_exte} (ii). 
Fix $T = (t_{x,y})_{x,y\in X}\in \cl S(\kappa)$. For a finite set $F\subseteq X$, let 
$P_F$ be the projection on $H$ whose range has basis $\{e_x : x\in F\}$ and set $T_F = P_F T P_F$.
The net $(T_F)_F$ lies in $\cl S_0(\kappa)$ and $T_F\to_F T$ in the weak* topology. 
For all $x,y\in X$ and all $\xi,\eta\in K$, we have
\begin{eqnarray*}
(\Psi(T)(e_y\otimes\xi),e_x\otimes \eta) 
& = & 
\mbox{w}^*\mbox{-}\lim\mbox{}_F (\Psi(T_F)(e_y\otimes\xi),e_x\otimes \eta)\\
& = & 
\mbox{w}^*\mbox{-}\lim\mbox{}_F (S_{\psi}^0(T_F)(e_y\otimes\xi),e_x\otimes \eta)\\
& = & (t_{x,y}\psi(x,y)\xi,\eta)
= (S_{\psi}(T)(e_y\otimes\xi),e_x\otimes \eta).
\end{eqnarray*}
We conclude that $\Psi = S_{\psi}$, and hence $S_{\psi}$ is weak* continuous.
\end{proof}

Note that if $\psi : \kappa\to \cl B(K)$ is a Schur multiplier then the
range of the map $S_{\psi}$, defined in Proposition \ref{p_opval}, is 
contained in the weak* closed spatial tensor product 
$\cl S(\kappa)\bar\otimes\cl B(K)$,  of $\cl S(\kappa)$ and $\cl B(K)$.

A Schur multiplier $\nph : X\times X\to \cl B(K)$ will be called
\emph{positive} if the map $S_{\nph}$ is positive, that is, if for
every positive operator~$T\in \cl B(H)$, the operator $S_{\nph}(T)\in
\cl{B}(H\otimes K)$ is also positive.  
An application of~\cite[Proposition 1.2]{pps} shows that if $\nph$ is
a positive Schur multiplier then the map $S_{\nph}$ is in fact
completely positive.

Let $\kappa \subseteq X\times X$. It is straightforward to verify that the
subspace $\cl S(\kappa)$ is an operator system ({\it i.e.} a selfadjoint unital
subspace of $\cl B(H)$) if and only if
\begin{enumerate}[(i)]
\item $\kappa$ is symmetric (that is, $(x,y)\in \kappa$ implies that
  $(y,x)\in \kappa$), and

\item $\kappa$ contains the diagonal of $X\times X$ (that is,
  $(x,x)\in \kappa$ for every $x\in X$).
\end{enumerate}
Note that such subsets $\kappa$ can be identified with
(undirected) graphs with vertex set $X$ and no loops:
for distinct elements $x,y\in X$, the subset $\{x,y\}$ is an edge if, by definition,
$(x,y)\in \kappa$. Thus, subsets satisfying properties (i) and (ii) above will hereafter be referred to as graphs.

\begin{definition}
  Let $\kappa\subseteq X\times X$ be a graph and let $\psi\colon
  \kappa\to \cl B(K)$ be a Schur multiplier.
We say that $\psi$ is \emph{partially positive} if for every
  subset $\al\subseteq X$ with $\al \times \al \subseteq \kappa$, the
  Schur multiplier $\psi|_{\al \times \al}$ is positive.
\end{definition}

Let $\kappa\subseteq X\times X$ be a graph.
A Schur multiplier $\nph : X\times X\to \cl B(K)$
will be called an \emph{extension} of the Schur multiplier 
$\psi : \kappa\to \cl B(K)$ if the restriction $\nph|_{\kappa}$ of $\nph$
to $\kappa$ coincides with $\psi$.
We will be interested in the question of when a Schur multiplier 
$\psi : \kappa\to \cl B(K)$ possesses a positive extension. 
Clearly, a necessary condition for this to happen is that 
$\psi$ be partially positive. In Theorem \ref{thm:op-valued}, we will identify 
conditions which ensure that the converse implication holds true.

We say that the vertices $x_1,\dots,x_n$ form a cycle of $\kappa$ (of length $n$) 
if $(x_i,x_{i+1})\in \kappa$ for all $i$ (where addition is performed mod $n$). 
A chord in such a cycle is an edge of the form $(x_i,x_k)$, where $2\leq |i-k|\leq n-2$.
We say that $\kappa$ is \emph{chordal} (see {\it e.g.} \cite{pps}) 
if every cycle of length at least four has a chord.

\begin{theorem}\label{thm:op-valued}
  Let $\kappa\subseteq X\times X$ be a graph. The following conditions are equivalent:
  \begin{enumerate}[(i)]
  \item every partially positive Schur multiplier $\psi: \kappa \to
    \cl B(K)$ has a positive extension;
  \item the graph $\kappa$ is chordal;
  \item  every positive operator in $\cl S(\kappa)$ is the weak* limit of
    sums of rank one positive operators in $\cl S(\kappa)$.
  \end{enumerate}
\end{theorem}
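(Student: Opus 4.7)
The plan is to establish the cyclic implications $(\text{ii}) \Rightarrow (\text{iii}) \Rightarrow (\text{i}) \Rightarrow (\text{ii})$, reducing each step to its finite-dimensional analogue from \cite{pps} and passing to infinite dimensions via weak* approximation.

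For $(\text{ii}) \Rightarrow (\text{iii})$, I would use that every induced subgraph of a chordal graph is chordal. Any positive $T \in \cl S(\kappa)$ is the weak* limit of the net of finite compressions $P_F T P_F$, where $F \subseteq X$ is finite and $P_F$ is the orthogonal projection onto $\spn\{e_x : x \in F\}$; each compression lies in $\cl S(\kappa \cap (F\times F))$, an operator system supported on a finite chordal graph. By the finite-dimensional result of \cite{pps}, $P_F T P_F$ is a finite sum of rank one positive operators each supported on a clique of $\kappa$, and these remain in $\cl S(\kappa)$; passing to the weak* limit yields (iii).

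For $(\text{iii}) \Rightarrow (\text{i})$, let $\psi$ be partially positive. By Proposition \ref{p_opval}, $S_\psi$ is bounded and weak* continuous. A rank one positive operator $\xi\xi^* \in \cl S(\kappa)$ must have $\supp(\xi) \times \supp(\xi) \subseteq \kappa$, so partial positivity forces $S_\psi(\xi\xi^*) \geq 0$; linearity, weak* continuity and (iii) then give $S_\psi \geq 0$ on $\cl S(\kappa)^+$, and the application of \cite[Proposition 1.2]{pps} noted earlier upgrades this to complete positivity. Arveson's extension theorem, applied first on $\cl S_0(\kappa) \subseteq \cl K(H)$ and then propagated weak* continuously via the construction in Lemma \ref{r_exte}, produces a CP extension $\Phi : \cl B(H) \to \cl B(H\otimes K)$ of $S_\psi$. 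To recognise $\Phi$ as coming from a Schur multiplier, I would average the family $T \mapsto (D^* \otimes I)\Phi(DTD^*)(D \otimes I)$ over the compact abelian group $\cl U(\cl D)$ of diagonal unitaries (equipped with its Haar measure); since $S_\psi$ is already a $\cl D$-bimodule map, this averaging leaves $\Phi$ unchanged on $\cl S(\kappa)$, and the resulting CP $\cl D$-bimodule map corresponds to a positive Schur multiplier $\varphi : X \times X \to \cl B(K)$ with $\varphi|_\kappa = \psi$.

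For $(\text{i}) \Rightarrow (\text{ii})$, I would argue by contraposition: if $\kappa$ contains a chordless cycle $x_1,\dots,x_n$ with $n \geq 4$, the finite-dimensional construction from \cite{pps} furnishes a scalar partially positive Schur multiplier on the induced subgraph which admits no positive extension; tensoring against any fixed positive rank one element of $\cl B(K)$, setting the values on diagonal pairs to $1$, and setting the values to $0$ on off-diagonal edges of $\kappa$ outside the cycle yields a partially positive operator-valued Schur multiplier on $\kappa$ whose restriction to the cycle already obstructs any positive global extension.

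\textbf{Main obstacle.} The genuinely delicate step is the bimodule correction in $(\text{iii}) \Rightarrow (\text{i})$: an arbitrary Arveson extension of the CP $\cl D$-bimodule map $S_\psi$ need not itself be a $\cl D$-bimodule map, and without this property one cannot read $\Phi$ off as $S_\varphi$ for any function $\varphi$. Averaging over $\cl U(\cl D)$ solves this in principle, but when $X$ is uncountable one must take care that the pointwise weak* integral is well-defined, preserves complete positivity, and lands in $\cl B(H \otimes K)$; alternatively one can extract $\varphi(x,y)$ directly from the matrix coefficients of $\Phi(E_{x,y})$ and verify using the complete positivity of $\Phi$ that the resulting $S_\varphi$ is positive and agrees with $S_\psi$ on $\cl S(\kappa)$. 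The other two implications are essentially routine transfers of finite-dimensional facts from \cite{pps} across the weak*-approximation bridge supplied by Lemma \ref{r_exte}.
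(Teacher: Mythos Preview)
Your proof is largely correct but takes a different route from the paper in the implication $(\text{iii})\Rightarrow(\text{i})$. For $(\text{ii})\Rightarrow(\text{iii})$ and $(\text{i})\Rightarrow(\text{ii})$ you follow essentially the paper's arguments (the paper extends the finite counterexample by $0$ rather than by the identity on the diagonal, but your version also works).

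For $(\text{iii})\Rightarrow(\text{i})$, the paper does \emph{not} use Arveson extension plus a bimodule correction. Instead it works finite-dimensionally throughout: condition~(iii) descends to every finite compression $\cl S(\kappa_F)$, so by \cite{pps} each $\psi|_{\kappa_F}$ has a positive extension $\varphi_F$ to $F\times F$; zero-extending these to $X\times X$ produces a uniformly bounded net $(S_{\tilde\varphi_F})$ of completely positive $\cl D$-bimodule maps on $\cl B(H)$, and a weak* cluster point $\Phi$ (via \cite[Theorem~7.4]{Pa}) is then automatically a $\cl D$-bimodule map, hence equals $S_\varphi$ for some positive extension $\varphi$. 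This sidesteps entirely the bimodule-correction problem you flag as the main obstacle.

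Your alternative approach is viable but needs two adjustments. First, Arveson's theorem does not apply to $\cl S_0(\kappa)\subseteq\cl K(H)$ as written, since $\cl S_0(\kappa)$ lacks a unit; you should apply it directly to the operator system $\cl S(\kappa)\subseteq\cl B(H)$ (weak* continuity of the extension is then lost, but your matrix-coefficient extraction does not need it). Second, the upgrade from positivity to complete positivity of $S_\psi$ on $\cl S(\kappa)$ is not quite what the cited application of \cite[Proposition~1.2]{pps} provides (that concerns fully defined multipliers); one needs the operator-system version as in \cite[Lemma~4.3]{naomi_tt}. With those fixes, defining $\varphi(x,y)$ via $(\varphi(x,y)\xi,\eta)=(\Phi(E_{x,y})(e_y\otimes\xi),e_x\otimes\eta)$ and using complete positivity of $\Phi$ on the positive matrix $(E_{x_i,x_j})_{i,j}$ shows $(\varphi(x_i,x_j))_{i,j}\geq 0$ for every finite set, which is exactly what is needed. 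Your instinct to abandon the averaging over $\cl U(\cl D)$ is correct: for uncountable $X$ the integrand $D\mapsto \Phi(DTD^*)$ need not be weak*-continuous since the Arveson extension $\Phi$ is not known to be normal. Interestingly, your corrected argument is closer in spirit to the paper's later Theorem~\ref{th_ns} for the measurable case than to its own proof of Theorem~\ref{thm:op-valued}.
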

\begin{proof}
We denote by $F$ an arbitrary finite subset of $X$, and let $\kappa_F=\kappa\cap (F\times F)$.

  (i)$\Rightarrow$(ii)   
We claim that condition~(i) is
  satisfied for the graph $\kappa_F$.  Indeed,
  given a partially positive Schur multiplier $\psi_F : \kappa_F \to
  \cl B(K)$, let $\psi : \kappa\to \cl B(K)$ be the extension of $\psi_F$ with 
$\psi(x,y) = 0$ if $(x,y)\in \kappa\setminus
  \kappa_F$. Since~$\psi$ has finite support, $\psi$ is a Schur
  multiplier on~$\kappa$. If $\alpha\subseteq X$ is a set with
  $\alpha\times\alpha\subseteq \kappa$, then
  $S_{\psi|_{\alpha\times\alpha}} = S_{\psi_F|_{(\alpha\cap
      F)\times(\alpha\cap F)}}\oplus 0$. Since~$\psi_F$ is partially
  positive, $S_{\psi_F|_{(\alpha\cap F)\times(\alpha\cap F)}}$ is
  positive, so $S_{\psi|_{\alpha\times\alpha}}$ is positive.  
  Thus, $\psi$ is partially positive. By assumption, $\psi$ has
  a positive extension, whose restriction to $F\times F$ is a
  positive extension of $\psi_F$.  It now follows from~\cite{pps}
  that $\kappa_F$ is chordal, and since 
this holds for every finite set $F$,   
we conclude that $\kappa$ is chordal.

(ii)$\Rightarrow$(iii)
Let $P_F$
  be the projection from $H$ onto $\ell^2(F)$ (when the latter is
  viewed as a subspace of $H$ in the natural way).  If $T\in \cl
  S(\kappa)$ is a positive operator then $T = \lim_F P_F T P_F$ in the
  weak* topology. On the other hand, $\kappa_F$ is
  chordal and hence, by~\cite{pps}, $P_F T P_F$ is the sum of rank
  one positive operators in $\cl S(\kappa)$.

  (iii)$\Rightarrow$(i) 
  Suppose that $\psi: \kappa \to \cl B(K)$ is a
  partially positive Schur multiplier. It is clear that $\psi_F:=
  \psi|_{\kappa_F}$ is a partially positive Schur multiplier
  on~$\kappa_F$.
  Since $\cl S(\kappa_F) = P_F \cl S(\kappa) P_F\subseteq \cl S(\kappa)$,
every positive operator in $\cl S(\kappa_F)$ is the weak* limit of
sums of rank one positive operators in $\cl S(\kappa_F)$;
since $\cl S(\kappa_F)$ is finite dimensional, we have that, in fact,  
every positive operator in $\cl S(\kappa_F)$ is the norm limit of 
sums of rank one positive operators in $\cl S(\kappa_F)$. 
Now \cite{pps}
implies that there exists a positive Schur multiplier  $\nph_F : F\times F \to \cl B(K)$  
whose restriction to $\kappa_F$ coincides with $\psi_F$.
Let $\tilde{\nph}_F: X \times X \to \cl B(K)$ be defined by
  \[
  \tilde{\nph}_F(x,y)=
    \begin{cases}
      \nph_F(x,y) & \text{if $x,y\in F$;} \\
      0 & \hbox{otherwise.}
    \end{cases}
  \]
  Then the map $S_{\tilde{\nph}_F}: \cl B(H) \to \cl B(H\otimes K)$ is
  completely positive.  On the other hand, since $\psi$ is a Schur
  multiplier on~$\kappa$ and~$I\in \cl S(\kappa)$, by \cite[Proposition 3.6]{Pa}, we have that
  \begin{align*}
    \|S_{\tilde{\nph}_F}\|_{\cb} &= \|S_{\tilde{\nph}_F}(I)\|
    = \max_{x\in F} \|\nph_F(x,x)\| = \max_{x\in F} \|\psi_F(x,x)\|
    \\&\leq \sup_{x\in X}\|\psi(x,x)\|=\|S_\psi(I)\|<\infty.
  \end{align*}
  So $\{S_{\tilde{\nph}_F}\}_F$ is a uniformly bounded family of completely
  positive maps from $\cl B(H)$ into $\cl B(H\otimes K)$.  
By
  \cite[Theorem~7.4] {Pa}, there exist 
a subnet $(\Phi_{F'})_{F'}$ and 
  a completely positive map 
  $\Phi :  \cl B(H) \to \cl B(H \otimes K)$ such that $\Phi_{F'}(T)\to \Phi(T)$ along $F'$
  in the weak* topology, for every $T\in \cl B(H)$. 
We have that $\Phi(ATB) = (A\otimes I) \Phi(T)(B\otimes I)$ for all
diagonal operators $A$, $B$ and all $T\in \cl B(H)$, and this easily implies 
that $\Phi(E_{x,y}) = E_{x,y}\otimes \nph(x,y)$ for some
$\nph(x,y)\in \cl B(K)$. 
Since $\Phi_{F'}(E_{x,y}) = E_{x,y}\otimes \psi(x,y)$ for $(x,y)\in \kappa_{F'}$, 
the map $\nph : X\times X\to \cl B(K)$ extends $\psi$.
Now, for $T = (t_{x,y})_{x,y\in X} \in \cl B(H)$, $\xi,\eta\in K$ and $x,y\in X$, we have 
\begin{eqnarray*}
(\Phi(T)(e_y\otimes\xi),e_x\otimes \eta) 
& = & 
((E_{x,x}\otimes I)\Phi(T)(E_{y,y}\otimes I)(e_y\otimes\xi),e_x\otimes \eta)\\
& = & 
(\Phi(t_{x,y}E_{x,y})(e_y\otimes\xi),e_x\otimes \eta)
= (t_{x,y}\nph(x,y)\xi,\eta),
\end{eqnarray*}
so $\Phi(T) = (t_{x,y}\nph(x,y))_{x,y}$. 
It follows that $\nph$ is a Schur multiplier and $\Phi = S_{\nph}$; since $\Phi$ is completely positive, 
$\nph$ is positive.
\end{proof}

\section{Positivity domains}\label{s_pdom}

In this section, we study the domains of Schur multipliers over 
arbitrary standard $\sigma$-finite measure spaces.
To set the stage, we recall some measure theoretic terminology~\cite{eks}.
Let $(X,\mu)$ be a standard $\sigma$-finite measure space.  
The set $X\times X$ will be equipped with the product $\sigma$-algebra
and the product measure $\mu\times\mu$.
A subset
$E\subseteq X\times X$ is called \emph{marginally null} if $E\subseteq
(M\times X)\cup (X\times M)$, where $M\subseteq X$ is null.  We call
two subsets $E,F\subseteq X\times X$ {\it marginally equivalent} 
(resp. {\it  equivalent}),
and write $E\cong F$ (resp. $E\sim F$), 
if their symmetric difference is marginally null (resp. null with respect to 
product measure).
We say that $E$ is \emph{marginally contained} in $F$ (and write
$E\subseteq_{\omega} F$) if the set difference $E\setminus F$ is
marginally null.  A subset $\kappa\subseteq X\times X$ will be called
\begin{itemize}
\item a \emph{rectangle} if $\kappa = \alpha\times\beta$ where
$\alpha,\beta$ are measurable subsets of~$X$;
\item a \emph{square} if $\kappa = \alpha\times\alpha$ where
$\alpha$ is a measurable subset of~$X$;
\item {\it $\omega$-open} if it is marginally equivalent to a countable union of rectangles, and 
\item {\it $\omega$-closed} if its complement $\kappa^c$ is $\omega$-open.
\end{itemize}
Recall that, by~\cite{stt_cl}, if $\cl E$ is any collection of 
$\omega$-open sets, then there exists a smallest, up to marginal
equivalence, $\omega$-open set $\cup_{\omega}\cl E$, called the
\emph{$\omega$-union} of $\cl E$, such that every
set in~$\cl E$ is marginally contained in $\cup_{\omega}\cl E$.
Given a measurable set $\kappa$, one defines its
\emph{$\omega$-interior} to be
\[\ointer(\kappa) =
\cup_{\omega}\{R : R \, \mbox{ is a rectangle with } R \subseteq \kappa\}.\]
The \emph{$\omega$-closure} $\ocl(\kappa)$ of~$\kappa$ is defined as
the complement of $\ointer(\kappa^c)$. 
For a set $\kappa\subseteq X\times X$, we write $\hat{\kappa} =
\{(x,y)\in X\times X : (y,x)\in \kappa\}$.

Unless otherwise stated, we use the symbol $H$ to denote the Hilbert
space $L^2(X,\mu)$. For each $a\in L^{\infty}(X,\mu)$, let $M_a :
H\to H$ be the multiplication operator given by $M_a f = af$ and let $\cl
D = \{M_a : a\in L^{\infty}(X,\mu)\}$ be the 
multiplication algebra of $L^{\infty}(X,\mu)$; we have that $\cl D$ is a masa in $\cl B(H)$. 
For a measurable subset
$\alpha\subseteq X$, we let $\chi_{\alpha}$ denote the characteristic function of $\alpha$,
and set $P(\alpha) = M_{\chi_{\alpha}}$, a projection in~$\D$.
A \emph{$\cl D$-bimodule} (or simply a \emph{masa-bimodule}) 
is a subspace $\cl S\subseteq \cl B(H)$ such
that $\cl D\cl S\cl D\subseteq \cl S$. 

Let~$\kappa$ be a measurable subset of~$X\times X$. An operator $T\in
\cl B(H)$ is said to be \emph{supported by} $\kappa$ if $P(\beta)TP(\alpha) =
0$ whenever $(\alpha\times\beta)\cap \kappa \cong \emptyset$.  Given
any masa-bimodule $\cl U$, there exists a unique (up to marginal
equivalence) smallest $\omega$-closed set $\kappa\subseteq X\times X$
such that every operator in $\cl U$ is supported by
$\kappa$~\cite{eks}.  The set~$\kappa$ is denoted by $\supp \cl U$ 
and is called the \emph{support} of $\cl U$.

For $k\in L^2(X\times X)$, the Hilbert-Schmidt operator $T_k : H\to H$
with integral kernel $k$ is defined by
\[T_k f(y) = \int_X k(y,x)f(x) d\mu(x), \ \ f \in H, y\in X.\]
For any measurable, $\omega$-closed subset 
$\kappa\subseteq X\times X$, let 
$$\cl S(\kappa) = \overline{\{T_k :  k \in L^2(\kappa)\}}^{w^*},$$
where $L^2(\kappa)$ is the space of functions in $L^2(X\times X)$ which are supported
on $\kappa$. It is easy to see that every operator $T\in \cl S(\kappa)$ is supported by 
$\hat \kappa$, so $\supp \cl S(\kappa)\subseteq_{\omega} \hat \kappa$.
Note that the latter inclusion may be strict. 
Indeed, if $\kappa$ has product measure zero then $\cl S(\kappa) = \{0\}$ 
and hence $\supp \cl S(\kappa)\cong \emptyset$; however, 
$\kappa$ does not need to be marginally null.

Suppose that $X$ is equipped with the counting measure.  Then $\cl D$
is the algebra of all diagonal operators on $\ell^2(X)$ and $\cl
S(\kappa)$ is the weak* closure of the linear span of the matrix units
$E_{x,y}$, with $(x,y)\in \kappa$; it thus coincides with the space
defined in Section \ref{s_Bell2} (see (\ref{eq_dis})). In particular, $\cl S(\kappa)$ is
generated, as a weak* closed subspace, by the rank one operators it contains. 
In view of Proposition \ref{th_ro} below, in this general context it is therefore 
natural to restrict attention to 
sets $\kappa$ which contain ``plenty of rectangles''. We will now make
this intuitive idea precise.

\begin{definition}
  A measurable subset~$\kappa\subseteq X\times X$ is said to be:

(i) \  \emph{generated by rectangles} if $\kappa\cong\ocl(\ointer(\kappa))$;

(ii) \emph{symmetric} if~$\kappa\cong\hat\kappa$.
\end{definition}

For $\xi,\eta\in H$, we denote by $\xi\otimes\eta^*$ the rank one operator on $H$
given by $(\xi\otimes\eta^*) (\zeta) = (\zeta,\eta)\xi$. 
We denote by $\cl S_1(\kappa)$ the set of all rank one operators in~$\cl  S(\kappa)$.

\begin{proposition}\label{th_ro}
  Let $\kappa\subseteq X\times X$ be an $\omega$-closed set.

(i) \  If~$\xi,\eta\in H$, then the rank one operator~$\xi\otimes
  \eta^*$ belongs to $\cl S(\kappa)$ if and only if $(\supp \xi)\times
  (\supp \eta)$ is marginally contained in~$\kappa$.

(ii) The set $\cl S_1(\kappa)$ generates~$\cl S(\kappa)$ as a weak* closed
  subspace of~$\cl B(H)$ if and only if~$\kappa$ is equivalent to $\ocl(\ointer(\kappa))$. 
\end{proposition}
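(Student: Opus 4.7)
The proof has two parts.

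For part (i), I would work directly with the Hilbert--Schmidt kernel. The rank-one operator $\xi \otimes \eta^*$ coincides with $T_k$ for $k(y,x) = \xi(y)\overline{\eta(x)}$, a function whose support is the rectangle $\supp\xi \times \supp\eta$. The ``if'' direction is then immediate: marginal containment of $\supp\xi \times \supp\eta$ in $\kappa$ implies that $k$ vanishes off $\kappa$ for product measure (since marginally null sets are product-null), so $k \in L^2(\kappa)$ and $T_k \in \cl S(\kappa)$. For ``only if'', I would invoke the observation made just before the proposition that every $T \in \cl S(\kappa)$ is supported by $\hat\kappa$; for $T = \xi\otimes\eta^*$, a direct calculation $P(\beta)(\xi\otimes\eta^*)P(\alpha) = (\chi_\beta\xi) \otimes (\chi_\alpha\eta)^*$ reveals that the support-by-$\hat\kappa$ condition is equivalent to $\supp\xi \times \supp\eta \subseteq_\omega \kappa$.

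For part (ii), write $\tilde\kappa := \ocl(\ointer(\kappa))$ and $\cl R := \overline{\spn \cl S_1(\kappa)}^{w^*}$. My strategy is to establish $\cl R = \cl S(\tilde\kappa)$, from which the stated equivalence follows: $\cl R = \cl S(\kappa)$ iff $\cl S(\tilde\kappa) = \cl S(\kappa)$ iff $\tilde\kappa \sim \kappa$. The last step uses that the Hilbert--Schmidt isometry $L^2(X \times X) \cong \mathrm{HS}(H)$ identifies $L^2(\lambda)$ with $\mathrm{HS}(H) \cap \cl S(\lambda)$; hence $\cl S(\kappa_1) = \cl S(\kappa_2)$ forces $L^2(\kappa_1) = L^2(\kappa_2)$, so $\kappa_1 \sim \kappa_2$. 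The inclusion $\cl R \subseteq \cl S(\tilde\kappa)$ is immediate from~(i): every $\xi \otimes \eta^* \in \cl S_1(\kappa)$ has rectangular support in $\kappa$, hence marginally in $\ointer(\kappa)$ (by the minimality in the definition of the $\omega$-union), hence in $\tilde\kappa$.

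The reverse inclusion $\cl S(\tilde\kappa) \subseteq \cl R$ is the crux. I would begin by decomposing $\ointer(\kappa) \cong \bigcup_n R_n$ as a countable union of rectangles $R_n = \alpha_n \times \beta_n$ that are set-theoretically contained in $\kappa$. The Hilbert-space identification $L^2(R_n) \cong L^2(\alpha_n) \otimes L^2(\beta_n)$ then writes any $T_k$ with $k \in L^2(R_n)$ as a Hilbert--Schmidt-norm (hence operator-norm, hence weak*) limit of finite sums of rank-one operators $f \otimes g^*$ with $\supp f \subseteq \alpha_n$, $\supp g \subseteq \beta_n$, each of which lies in $\cl S_1(\kappa)$. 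Approximating $k \in L^2(\ointer(\kappa))$ by its restrictions to $R_1 \cup \cdots \cup R_N$ and decomposing on each $R_n$ then gives $\{T_k : k \in L^2(\ointer(\kappa))\} \subseteq \cl R$. The main obstacle is upgrading this to $\cl S(\tilde\kappa) \subseteq \cl R$: one needs the equality $L^2(\ointer(\kappa)) = L^2(\tilde\kappa)$, equivalently $\ointer(\kappa) \sim \tilde\kappa$, i.e.\ the assertion that for any $\omega$-open set $L$ the ``$\omega$-boundary'' $\ocl(L) \setminus L$ is product-null. This is a structural fact about $\omega$-open sets in standard $\sigma$-finite measure spaces, accessible via the framework of \cite{stt_cl}; it can also be verified directly by a ``pattern-map'' decomposition of $\ointer(\kappa)^c$ using the map $x \mapsto \{n : x \in \alpha_n\}$ and $\sigma$-finiteness to reduce to countably many positive-measure pattern classes.
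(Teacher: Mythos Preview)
Your argument for part~(i) is essentially the paper's: both directions go through the observation that operators in $\cl S(\kappa)$ are supported by $\hat\kappa$, and the explicit calculation $P(\beta)(\xi\otimes\eta^*)P(\alpha)=(\chi_\beta\xi)\otimes(\chi_\alpha\eta)^*$.

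For part~(ii) your route is genuinely different. The paper does \emph{not} attempt a direct Hilbert--Schmidt decomposition on rectangles. Instead, writing $\lambda=\ocl(\ointer(\kappa))$ and $\cl V=\overline{\spn\cl S_1(\lambda)}^{w^*}$, it passes to the reflexive hull $\cl U=\Ref(\cl V)$, invokes \cite[Theorem~5.2]{eks} to identify $\supp\cl U\cong\hat\lambda$, and then appeals to Arveson's theory of pseudo-integral operators \cite{a} to conclude that $\cl V=\cl S(\lambda)$ (both being masa-bimodules generated by pseudo-integral operators with support $\hat\lambda$). This is short but imports substantial machinery. Your approach trades that machinery for one measure-theoretic lemma: that $\ocl(L)\setminus L$ is product-null for every $\omega$-open $L$. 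The pattern-map argument you sketch is correct --- partitioning $X$ by $x\mapsto\{n:x\in\alpha_n\}$ and using $\sigma$-finiteness to discard all but countably many pattern classes does yield a countable union of rectangles $A_{S_k}\times B_{S_k}\subseteq_\omega F$ whose sections exhaust those of $F=L^c$ almost everywhere, whence Fubini gives the claim. Your approach is more elementary and self-contained; the paper's is more conceptual and situates the result within the existing bimodule framework.

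One small point worth making explicit in your write-up: the step $\cl S(\kappa)=\cl S(\tilde\kappa)\Rightarrow L^2(\kappa)=L^2(\tilde\kappa)$ needs the identification $\cl S(\lambda)\cap\mathrm{HS}(H)=\{T_k:k\in L^2(\lambda)\}$ for $\omega$-closed $\lambda$, which itself uses that every operator in $\cl S(\lambda)$ is supported by $\hat\lambda$ (the same fact you used in~(i)). The paper glosses this as ``it now easily follows'', but since you are avoiding the support machinery elsewhere, you should state it.
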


\begin{proof}
(i) Suppose that $\xi\otimes\eta^* \in \cl S(\kappa)$. Then the operator $\xi\otimes\eta^*$
is supported by $\hat{\kappa}$. Writing 
$\alpha = \supp\xi$, $\beta = \supp\eta$ and 
$\kappa^c \cong \cup_{i=1}^{\infty} \alpha_i\times\beta_i$, 
where $\alpha_i,\beta_i\subseteq X$ are measurable for $i\in \bb{N}$, 
we have, in particular, that 
$(\beta_i\times\alpha_i)\cap \hat{\kappa} \cong \emptyset$,
so 
$$(\chi_{\alpha_i}\xi)\otimes (\chi_{\beta_i}\eta)^* = P(\alpha_i)(\xi\otimes\eta^*)P(\beta_i) = 0,$$
hence either 
$\alpha_i\cap \alpha$ or $\beta_i\cap \beta$ is null, for each $i\in \bb{N}$. 
It follows that $(\alpha\times\beta)\cap \kappa^c \cong \emptyset$, so 
$\alpha\times\beta\subseteq_{\omega}\kappa$.

Conversely, if $\alpha\times\beta\subseteq_{\omega}\kappa$ then 
the integral kernel of $\xi\otimes\eta^*$ is clearly in $L^2(\kappa)$, and thus
$\xi\otimes\eta^*\in \cl S(\kappa)$.

(ii) 
Set $\lambda = \ocl(\ointer(\kappa))$. 
Let $\cl V = \overline{{\rm span}(\cl S_1(\lambda))}^{w^*}$ and 
$\cl U = \Ref(\cl V)$ be the reflexive hull of $\cl V$ in the sense of \cite{ls}. 
By \cite[Theorem 5.2]{eks}, $\supp\cl U \cong \hat{\lambda}$; 
since $\supp\cl V \cong \supp \cl U$ \cite{eks}, 
we conclude that $\supp\cl V \cong \hat{\lambda}$. 
Thus, 
$$\cl V\subseteq \cl S(\lambda)\subseteq \cl U.$$
Since Hilbert-Schmidt operators are pseudo-integral in the sense of \cite{a}, 
we have that both $\cl V$ and $\cl S(\lambda)$ are masa-bimodules,
generated as weak* closed subspaces by pseudo-integral operators supported by $\hat{\lambda}$. 
It follows from \cite{a} 
that $\cl V = \cl S(\lambda)$. 

If $\kappa\sim \lambda$ then 
$$\cl S(\kappa) = \cl S(\lambda) = \cl V = \overline{{\rm span}(\cl S_1(\kappa))}^{w^*}.$$
Conversely, suppose that $\overline{{\rm span}(\cl S_1(\kappa))}^{w^*} = \cl S(\kappa)$. 
By (i), $\cl S_1(\lambda) = \cl S_1(\kappa)$, and the previous paragraph implies that
$$\overline{{\rm span}(\cl S_1(\kappa))}^{w^*}
= \overline{{\rm span}(\cl S_1(\lambda))}^{w^*} = \cl S(\lambda).$$
Thus, $\cl S(\kappa) = \cl S(\lambda)$ and it now easily follows that $\kappa\sim\lambda$. 
\end{proof}

Let~$\Delta = \{(x,x) : x\in X\}$ denote the diagonal of~$X$.

\begin{proposition}\label{p_opsc}
  If $\kappa\subseteq X\times X$ is generated by rectangles, then the following are equivalent:
  \begin{enumerate}[(i)]
  \item $\cl S(\kappa)$ is an operator system;
  \item $\kappa$ is symmetric and~$\Delta\subseteq_\omega\kappa$.
  \end{enumerate}
\end{proposition}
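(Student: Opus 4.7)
Both implications hinge on the identity $\cl S(\kappa)^* = \cl S(\hat\kappa)$: the adjoint of $T_k$ is $T_{k^\vee}$, where $k^\vee(y,x) = \overline{k(x,y)}$ has support contained in $\hat\kappa$, and since taking adjoints is weak*-continuous, passing to the closure yields the equality. A useful second ingredient is that $\supp I = \Delta$: indeed $P(\beta) I P(\alpha) = P(\alpha \cap \beta)$ vanishes precisely when $\mu(\alpha \cap \beta) = 0$, i.e.\ when $(\alpha \times \beta) \cap \Delta \cong \emptyset$.

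For (i)$\Rightarrow$(ii), selfadjointness of $\cl S(\kappa)$ gives $\cl S(\kappa) = \cl S(\hat\kappa)$, and the uniqueness observation made at the end of the proof of Proposition~\ref{th_ro}(ii)---namely that equality of the Schur spaces of two sets generated by rectangles forces their marginal equivalence---then yields $\kappa \cong \hat\kappa$, i.e.\ symmetry. Using the remark preceding Proposition~\ref{th_ro}, every element of $\cl S(\kappa)$ has support marginally contained in $\hat\kappa$, so $I \in \cl S(\kappa)$ gives $\Delta = \supp I \subseteq_\omega \hat\kappa$; since $\Delta = \hat\Delta$, this is the same as $\Delta \subseteq_\omega \kappa$.

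For (ii)$\Rightarrow$(i), selfadjointness follows immediately: $\cl S(\kappa)^* = \cl S(\hat\kappa) = \cl S(\kappa)$, where the last equality uses $\hat\kappa \cong \kappa$ (so $L^2(\hat\kappa) = L^2(\kappa)$ as subspaces of $L^2(X \times X)$). The more delicate task is showing $I \in \cl S(\kappa)$. My plan is to invoke Arveson's theory of pseudo-integral operators \cite{a} exactly in the form used in the proof of Proposition~\ref{th_ro}(ii). Every multiplication operator $M_a$ with $a \in L^\infty(X,\mu)$ is a pseudo-integral operator whose support is contained in $\Delta$, and by the hypothesis $\Delta \subseteq_\omega \kappa \cong \hat\kappa$, its support is therefore marginally contained in $\hat\kappa$. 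Since $\cl S(\kappa)$ is the weak*-closed masa-bimodule generated by pseudo-integral operators supported marginally in $\hat\kappa$ (this is precisely the Arveson-based identification $\cl V = \cl S(\lambda)$ established in the proof of Proposition~\ref{th_ro}(ii)), it follows that $\cl D \subseteq \cl S(\kappa)$, and in particular $I = M_1 \in \cl S(\kappa)$.

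The main obstacle is justifying that the Arveson identification extends to diagonal-supported pseudo-integral operators and not only to Hilbert--Schmidt kernels---essentially, confirming that the reasoning at the end of the proof of Proposition~\ref{th_ro}(ii) really does place the masa $\cl D$ inside $\cl S(\kappa)$ whenever $\Delta \subseteq_\omega \kappa$. An alternative, more hands-on route would be to build a countable measurable partition $X \cong \sqcup_n \alpha_n$ with $\alpha_n \times \alpha_n \subseteq_\omega \kappa$: each $P(\alpha_n) = I_{L^2(\alpha_n)}$ lies in $\cl S(\alpha_n \times \alpha_n) = \cl B(L^2(\alpha_n)) \subseteq \cl S(\kappa)$, and the weak* sum recovers $I$. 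The existence of such a partition, which one would attempt to extract from the decomposition $\ointer(\kappa) \cong \bigcup_i \beta_i \times \gamma_i$ by forming the squares $(\beta_i \cap \gamma_i) \times (\beta_i \cap \gamma_i) \subseteq_\omega \kappa$ and showing $\bigcup_i(\beta_i \cap \gamma_i)$ is co-null, is the core measure-theoretic difficulty in this alternative approach.
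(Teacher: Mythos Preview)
Your primary approach is essentially the paper's own: both directions use the identity $\cl S(\kappa)^* = \cl S(\hat\kappa)$, the support identification $\supp\cl S(\kappa)\cong\hat\kappa$ for rectangle-generated $\kappa$ (extracted from the proof of Proposition~\ref{th_ro}), and Arveson's pseudo-integral machinery to place $\cl D$ inside $\cl S(\kappa)$. The paper invokes the Arveson step with exactly the level of detail you do---citing~\cite{a} and the proof of Proposition~\ref{th_ro}(ii)---so the ``obstacle'' you flag is not a gap relative to the paper's argument; it is the same appeal to the same result. One small imprecision: the last line of the proof of Proposition~\ref{th_ro}(ii) gives only $\kappa\sim\lambda$, not $\kappa\cong\lambda$, so your citation for deducing $\kappa\cong\hat\kappa$ is slightly off. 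The paper instead argues directly via supports ($\supp\cl S(\kappa)\cong\hat\kappa$ and $\supp\cl S(\kappa)^*\cong\kappa$), which does yield marginal equivalence; your argument can be patched the same way.

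Your alternative ``hands-on'' route, however, is not merely difficult---it fails in general. Corollary~\ref{c_exi} (proved later in the paper) exhibits a positivity domain $\kappa$ containing no non-null square whatsoever, so no partition $X=\bigsqcup_n\alpha_n$ with $\alpha_n\times\alpha_n\subseteq_\omega\kappa$ can exist. The hypothesis $\Delta\subseteq_\omega\kappa$ is strictly weaker than $\Delta\subseteq_\omega\sqinter(\kappa)$; it is the latter condition that Theorem~\ref{th_charomi} shows to be equivalent to the existence of such a partition. In particular, in the example of Corollary~\ref{c_exi} the rectangles $\beta_i\times\gamma_i$ covering $\ointer(\kappa)$ all have $\beta_i\cap\gamma_i$ null, so your proposed extraction of squares collapses. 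This is precisely why the Arveson argument is essential here and cannot be replaced by anything elementary: the identity operator can lie in $\cl S(\kappa)$ without being approximable by sums of positive rank-one operators from $\cl S(\kappa)$.
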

\begin{proof} 
(i)$\Rightarrow$(ii)
By Proposition \ref{th_ro} and its proof, 
$\cl S(\kappa) = \overline{{\rm span}(\cl S_1(\kappa))}^{w^*}$ and
$\supp \cl S(\kappa) \cong \hat{\kappa}$. Similarly, 
letting $\cl S(\kappa)^* = \{T^* : T\in \cl S(\kappa)\}$, we have that 
$\supp \cl S(\kappa)^* \cong \kappa$; thus $\hat{\kappa}\cong \kappa$. 
On the other hand, by assumption, $I\in \cl S(\kappa)$; since $\cl S(\kappa)$ is a 
masa-bimodule, we have that $\cl D\subseteq \cl S(\kappa)$. Thus, 
$\Delta \cong \supp\cl D \subseteq_{\omega} \kappa$.

(ii)$\Rightarrow$(i)
If $k\in L^2(\kappa)$ then the function $k^*$ given by $k^*(x,y) = \overline{k(y,x)}$
is in $L^2(\hat{\kappa})$. Since $\kappa\cong \hat{\kappa}$, we have that 
$k^*\in L^2(\kappa)$ and thus $T_k^* = T_{k^*}\in \cl S(\kappa)$. 
Hence, $\cl S(\kappa)^* = \cl S(\kappa)$. 

Since $\cl D$ consists of pseudo-integral operators in terms of \cite{a} and 
$\supp\cl D \cong \Delta$, 
by Proposition \ref{th_ro} and its proof we have that 
$\cl D\subseteq \cl S(\kappa).$
\end{proof}

\begin{definition}\label{d_pd}
  A measurable set $\kappa\subseteq X\times X$ will be called a
  \emph{positivity domain} if $\kappa$ is generated by rectangles,
  $\kappa$ is symmetric and $\Delta \subseteq_{\omega} \kappa$.
\end{definition}

By Proposition~\ref{th_ro} and Proposition~\ref{p_opsc}, an $\omega$-closed set 
$\kappa\subseteq X\times X$ is equivalent to a positivity domain if and only if $\cl S(\kappa)$
is an operator system, which is generated (as a weak* closed linear space) by the 
rank one operators it contains. Note that every operator system is generated, as a linear space,
by the positive operators it contains. 
However, as we will show 
in Corollary \ref{c_exi}, $\cl S(\kappa)$ does not need to contain 
a \emph{positive rank one} operator. 
It is worth noting that 
this phenomenon does not occur in the case of a discrete measure space $X$;
indeed, in this case, any diagonal matrix unit belongs to $\cl S(\kappa)$.

\begin{remark}\label{rk_36} 
Recall that a \emph{Schur idempotent} is a weak* continuous $\cl
D$-bimodule map $\Phi\in \B(\B(H))$ such that $\Phi\circ\Phi = \Phi$.
  Suppose that~$\Phi$ is a (completely) positive Schur idempotent; in this
  case, the range $\ran \Phi$ of $\Phi$ is selfadjoint.
  Trivially, $\ran \Phi$ is an operator system if and only if
  $\Phi$ is unital, that is, if and only if $\Phi(I) = I$.  
By \cite[Proposition 3.6]{Pa}, $\|\Phi\| = \|\Phi(I)\|$, and it follows that if $\ran \Phi$ is an operator system 
then $\Phi$ is contractive. 
Conversely, if $\|\Phi\| = 1$ then, by~\cite{kp}, $\ran \Phi = \cl S(\kappa)$, where $\kappa
  \cong \bigcup_{i=1}^{\infty}\alpha_i\times\alpha_i$ for some sequence
  $(\alpha_i)_{i=1}^{\infty}$ of pairwise disjoint measurable subsets
  of $X$ with $\cup_{i=1}^{\infty}\alpha_i = X$, and hence $\cl S(\kappa)$ is in this case an operator system.
\end{remark}

We next introduce a way to quantify the amount of positive rank one 
operators contained in $\cl S(\kappa)$, where $\kappa$ is a positivity domain. 
To this end, we define the
\emph{square interior} $\sqinter(\kappa)$ of~$\kappa$ by
  \[ \sqinter(\kappa) \stackrel{def}{=} \bigcup\mbox{}_\omega\{Q : Q \, \mbox{ is a square with }Q\subseteq
    \kappa\}.\] 
By Proposition \ref{th_ro} (i), 
$\cl S(\kappa)$ contains a positive rank one operator if and only if 
$\sqinter(\kappa) \not\cong \emptyset$. 
We will say that~$\kappa$ is \emph{generated by squares} if~$\kappa \cong \ocl(\sqinter(\kappa))$. 
Theorem \ref{th_charomi} below characterises the positivity domains with this property. 
  For a positivity domain $\kappa\subseteq X\times X$, let
  $\S_1^+(\kappa)$ denote the set of positive rank one operators
  in~$\S(\kappa)$, and set
  \[[\cl S_1^+(\kappa)] =
\left\{\sum_{i=1}^k R_i :\text{$k\in\bb N$ and $R_i\in
        \S_1^+(\kappa)$ for $1\leq i\leq k$}\right\}.
   \]

\begin{theorem}\label{th_charomi}
Let $\kappa\subseteq X\times X$ be a positivity domain. 
The following are equivalent:

\begin{enumerate}[(i)]
\item there is a unital contractive Schur idempotent $\Phi$ with
  $\ran\Phi\subseteq \cl S(\kappa)$;
\item there is a countable partition $X=\bigcup_{i=1}^\infty \alpha_i$
  of~$X$ into measurable subsets~$\alpha_i$ so that
  $\alpha_i\times\alpha_i\subseteq_{\omega} \kappa$ for each $i\in
  \bb{N}$;

  \item $I\in \overline{[\cl S_1^+(\kappa)]}^{w^*}$;

  \item $\Delta\subseteq_{\omega} \sqinter(\kappa)$;
  \item $\kappa$ is generated by squares.
\end{enumerate}
\end{theorem}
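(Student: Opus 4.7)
My plan is to establish (i)$\Leftrightarrow$(ii) directly via Remark~\ref{rk_36}, then the cycle (ii)$\Rightarrow$(iii)$\Rightarrow$(iv)$\Rightarrow$(ii), and finally the equivalence (iv)$\Leftrightarrow$(v). For (i)$\Rightarrow$(ii), Remark~\ref{rk_36} gives $\ran\Phi=\cl S(\lambda)$ for some $\lambda\cong\bigcup_i\alpha_i\times\alpha_i$ coming from a countable partition $(\alpha_i)$ of $X$; the containment $\cl S(\lambda)\subseteq\cl S(\kappa)$ translates, via the support calculus of masa-bimodules, into $\alpha_i\times\alpha_i\subseteq_\omega\kappa$ for each $i$. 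Conversely, given such a partition, the SOT-convergent series $\Phi(T):=\sum_iP(\alpha_i)TP(\alpha_i)$ defines a unital completely positive Schur idempotent whose range lies in $\cl S(\kappa)$, since each summand $P(\alpha_i)TP(\alpha_i)\in\cl S(\alpha_i\times\alpha_i)\subseteq\cl S(\kappa)$ and $\cl S(\kappa)$ is weak* closed.

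For (ii)$\Rightarrow$(iii), I expand $P(\alpha_i)=\sum_kf_k\otimes f_k^*$ in SOT over an orthonormal basis $(f_k)$ of $L^2(\alpha_i)$; by Proposition~\ref{th_ro}(i) each $f_k\otimes f_k^*$ lies in $\cl S_1^+(\kappa)$ since $(\supp f_k)\times(\supp f_k)\subseteq\alpha_i\times\alpha_i\subseteq_\omega\kappa$. Because $\overline{[\cl S_1^+(\kappa)]}^{w^*}$ is a weak* closed convex cone, it contains each $P(\alpha_i)$, their finite sums, and hence $I=\sum_iP(\alpha_i)$. For (iv)$\Rightarrow$(ii), write $\sqinter(\kappa)\cong\bigcup_i\gamma_i\times\gamma_i$ as a countable union of squares marginally in $\kappa$; then $\Delta\subseteq_\omega\sqinter(\kappa)$ forces $X=\bigcup_i\gamma_i$ up to a null set, and disjointification $\alpha_i:=\gamma_i\setminus\bigcup_{j<i}\gamma_j$ yields the required partition. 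The step I expect to require most care is (iii)$\Rightarrow$(iv): every element of $[\cl S_1^+(\kappa)]$ is supported on a finite union of squares in $\kappa$, so weak* limits are supported on $\ocl(\sqinter(\kappa))$, and from $\supp I=\Delta$ we deduce $\Delta\subseteq_\omega\ocl(\sqinter(\kappa))$. The heart of the matter is the following diagonal lemma: if $U\cong\bigcup_i\gamma_i\times\gamma_i$ is an $\omega$-union of squares and $\Delta\subseteq_\omega\ocl(U)$, then $\Delta\subseteq_\omega U$. Indeed, if $Z:=X\setminus\bigcup_i\gamma_i$ had positive measure then $(Z\times Z)\cap(\gamma_i\times\gamma_i)=(Z\cap\gamma_i)\times(Z\cap\gamma_i)=\emptyset$ for every $i$ (this is the essential place where squares, rather than arbitrary rectangles, are used), so $Z\times Z$ would be marginally disjoint from $U$ and hence from $\ocl(U)$; but $(Z\times Z)\cap\Delta$ is the diagonal of $Z$, which is not marginally null when $Z$ has positive measure, contradicting $\Delta\subseteq_\omega\ocl(U)$.

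For (iv)$\Leftrightarrow$(v), the inclusion $\ocl(\sqinter(\kappa))\subseteq_\omega\kappa$ is automatic since $\sqinter(\kappa)\subseteq_\omega\ointer(\kappa)$ and $\kappa$ is $\omega$-closed. For (iv)$\Rightarrow$(v), any rectangle $\alpha\times\beta\subseteq_\omega\kappa$ decomposes as $\bigcup_{i,j}(\alpha\cap\alpha_i)\times(\beta\cap\alpha_j)$ via the partition from (ii), and each piece lies in the square $((\alpha\cap\alpha_i)\cup(\beta\cap\alpha_j))\times((\alpha\cap\alpha_i)\cup(\beta\cap\alpha_j))$, which is marginally in $\kappa$ because its four rectangular components are contained in $\alpha_i\times\alpha_i$, $\alpha_j\times\alpha_j$, $\alpha\times\beta$, and $\beta\times\alpha$, all marginally in $\kappa$ (the last by symmetry). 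This yields $\ointer(\kappa)\subseteq_\omega\sqinter(\kappa)$ and hence $\kappa\cong\ocl(\sqinter(\kappa))$. For (v)$\Rightarrow$(iv), apply the diagonal lemma to $\Delta\subseteq_\omega\kappa\cong\ocl(\sqinter(\kappa))$.
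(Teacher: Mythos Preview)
Your proposal is correct and largely parallels the paper's proof, but with one genuine difference worth noting.

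The paper establishes the implication out of (iii) by proving (iii)$\Rightarrow$(i) via a Zorn's Lemma argument: one takes a maximal family of mutually orthogonal contractive Schur idempotents with range in $\cl S(\kappa)$, sums them to obtain $\Phi$, and then argues that if $\Phi$ were not unital, hypothesis (iii) restricted to the complementary corner would produce an additional square, contradicting maximality. You instead prove (iii)$\Rightarrow$(iv) directly: since $[\cl S_1^+(\kappa)]\subseteq \cl S(\lambda)$ for $\lambda=\ocl(\sqinter(\kappa))$ and $\cl S(\lambda)$ is weak* closed, condition (iii) gives $I\in\cl S(\lambda)$, whence $\Delta\subseteq_\omega\lambda$, and your diagonal lemma upgrades this to $\Delta\subseteq_\omega\sqinter(\kappa)$. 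This avoids Zorn entirely and is arguably cleaner; moreover your diagonal lemma is exactly the computation the paper uses for (v)$\Rightarrow$(iv), so you get double mileage from it. The remaining implications --- (i)$\Leftrightarrow$(ii), (ii)$\Rightarrow$(iii), (iv)$\Rightarrow$(ii), and (iv)$\Rightarrow$(v) via the four-rectangle decomposition of $(\beta_i\cup\gamma_j)\times(\beta_i\cup\gamma_j)$ --- match the paper's arguments essentially verbatim.

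Two small points to tighten. First, when you ``write $\sqinter(\kappa)\cong\bigcup_i\gamma_i\times\gamma_i$ as a countable union of squares marginally in $\kappa$'', this is not automatic from the definition of the $\omega$-union; the paper invokes \cite[Lemma~2.1]{stt_cl} for precisely this step (in both (iv)$\Rightarrow$(ii) and (v)$\Rightarrow$(iv)), and you should cite it too. Second, in (iii)$\Rightarrow$(iv) the phrase ``$\supp I=\Delta$'' is shorthand for the chain $I\in\cl S(\lambda)\Rightarrow\cl D\subseteq\cl S(\lambda)\Rightarrow\Delta\cong\supp\cl D\subseteq_\omega\widehat\lambda\cong\lambda$ (the last marginal equivalence using that $\lambda$, as the $\omega$-closure of a union of squares, is symmetric); this is correct, but the notation is slightly informal since support is defined for bimodules rather than single operators.
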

\begin{proof}
  (i)$\Leftrightarrow$(ii) follows from~\cite{kp} (see also
  Remark~\ref{rk_36}), while the implication (ii)$\Rightarrow$(iv) 
  follows from the inclusions 
  $$\Delta\subseteq \bigcup_{i=1}^{\infty} \alpha_i\times\alpha_i \subseteq_{\omega} 
\sqinter(\kappa).$$

  (ii)$\Rightarrow$(iii) 
    Clearly, $P(\alpha_j)\in \overline{[\cl S_1^+(\alpha_j\times\alpha_j)]}^{w^*}$; thus
  $P(\alpha_j)\in \overline{[\cl S_1^+(\kappa)]}^{w^*}$ and hence
  \[I = \mbox{w}^*\mbox{-}\lim_{N\to\infty} \sum_{j=1}^N P(\alpha_j)\in 
  \overline{[\cl S_1^+(\kappa)]}^{w^*}.\]

  (iii)$\Rightarrow$(i) Let $\Sigma$ be the collection of
  all countable sets $\{\Psi_i\}_{i\in \bb{N}}$ where each $\Psi_i$ is a
  contractive Schur idempotent with $\ran\Psi_i\subseteq \cl
  S(\kappa)$ and $\Psi_i\Psi_j = 0$ for all $i\neq j$. We order
  $\Sigma$ by inclusion.  The set $\Sigma$ is non-empty, since
  it contains the singleton consisting of the zero map.
  Suppose that $\Lambda\subseteq \Sigma$ is a non-empty chain. Since
  $H$ is separable, $\Lambda$ is countable, and hence its union is its
  upper bound.  By Zorn's Lemma, there exists a maximal element in
  $\Sigma$, say $\{\Phi_i\}_{i\in \bb{N}}$. Define $\Phi$ by $\Phi(T) =
  \sum_{i=1}^{\infty} \Phi_i(T)$, $T\in \cl B(H)$ (where the sum
  converges in the strong operator topology), so that~$\Phi$ is a
  contractive Schur idempotent with range contained
  in~$\S(\kappa)$. It remains to show that $\Phi$ is unital, or
  equivalently, that~$I\in \ran(\Phi)$. Let $(\alpha_i)_{i\in \bb{N}}$
  be a (countable) family of mutually disjoint measurable subsets of
  $X$ such that $\Phi(T) = \sum_{i=1}^{\infty} P(\alpha_i) T
  P(\alpha_i)$, $T\in \cl B(H)$ (see~\cite{kp} or Remark~\ref{rk_36}).
  Set $P = \sum_{i=1}^{\infty} P(\alpha_i)$ (where the sum converges
  in the strong operator topology).  We claim that $P = I$. To see
  this, suppose that $P^{\perp} = P(\alpha)$ for some measurable subset
  $\alpha$ of $X$ and note that $P^{\perp}\cl S(\kappa) P^{\perp} = \cl
  S((\alpha\times\alpha)\cap \kappa)$ is an operator system on the Hilbert space
  $P^{\perp}H$.  Since $\cl S(\kappa)$ satisfies the assumption in
  (iii), so does $P^{\perp}\cl S(\kappa) P^{\perp}$; so
  if~$P^\perp\ne0$, then $\S_1^+((\alpha\times\alpha)\cap \kappa)\ne \emptyset$. Hence by
  Proposition~\ref{th_ro}, there exists a non-trivial square $\beta\times\beta$,
  marginally contained in $(\alpha\times\alpha)\cap \kappa$.  Letting $\Phi_0$ be
  the Schur idempotent given by $\Phi_0(T) = P(\beta)TP(\beta)$, we
  see that the family $\{\Phi_i\}_{i=0}^{\infty}$ strictly contains
  $\{\Phi_i\}_{i=1}^{\infty}$, a contradiction. It follows that $P = I$
  and hence $I\in \ran\Phi$.

  (iv)$\Rightarrow$(ii) Suppose that $\Delta\subseteq_{\omega}
  \sqinter(\kappa)$. By \cite[Lemma~2.1]{stt_cl} there exists a family $(\beta_i)_{i\in
    \bb{N}}$ of measurable (not necessarily mutually disjoint) subsets
  of $X$ such that
  \[\Delta \subseteq_{\omega}
  \bigcup_{i=1}^{\infty} \beta_i\times \beta_i \subseteq_{\omega} \kappa.\] This
  implies that, up to a null set, $\bigcup_{i=1}^{\infty} \beta_i =
  X$.  Let $\alpha_1 = \beta_1$ and $\alpha_n = \beta_n \setminus
  (\bigcup_{j=1}^{n-1} \alpha_j)$.  Then $\alpha_n\times\alpha_n\subseteq
  \beta_n\times \beta_n\subseteq_{\omega} \kappa$ and $\bigcup_{n=1}^{\infty}
  \alpha_n = X$, up to a null set which we may adjoin to
  (say)~$\alpha_1$.

  (ii)$\Rightarrow$(v) Let~$(\alpha_i)_{i\in \bN}$ be the partition
  from~(ii). Given a rectangle $\beta\times \gamma\subseteq_\omega
  \kappa$, observe that 
  $\gamma\times \beta\subseteq_\omega \hat \kappa  \cong \kappa$ since~$\kappa$ is
  symmetric, and for~$i,j\in\bb N$ let $\beta_i=\beta\cap
  \alpha_i$ and $\gamma_j=\gamma\cap \alpha_j$. Then
  \begin{align*}
\beta_i\times\gamma_j    &\subseteq
    (\beta_i\cup \gamma_j)\times (\beta_i\cup\gamma_j)\\
    & = (\beta_i\times \beta_i)\cup (\beta_i\times
    \gamma_j)\cup(\gamma_j\times \beta_i)\cup
    (\gamma_j\times \gamma_j)\\&\subseteq (\alpha_i\times\alpha_i)\cup(\beta\times
    \gamma)\cup(\gamma\times\beta)\cup (\alpha_j\times\alpha_j) \subseteq_\omega
    \kappa.
  \end{align*}
  It follows that 
  $$\beta\times\gamma = \bigcup_{i,j = 1}^{\infty} \beta_i\times\gamma_j\subseteq_\omega \kappa.$$
  Taking the $\omega$-union over all such rectangles, we conclude that
  \[ \ointer(\kappa)\cong \sqinter(\kappa).\] 
  Since~$\kappa$ is generated by rectangles, we have
  \[\kappa\cong\ocl(\ointer(\kappa))\cong
  \ocl(\sqinter(\kappa)),\]
  and hence $\kappa$ is generated by squares.

  (v)$\Rightarrow$(iv) 
 By \cite[Lemma~2.1]{stt_cl}, we can write  $\sqinter(\kappa) \cong \cup_{i=1}^{\infty} (\alpha_i\times\alpha_i)$, 
 for some 
 sequence $(\alpha_i)_{i\in \bb{N}}$ of measurable subsets of $X$. 
 Set $Y = \cup_{i\in \bb{N}}\alpha_i$. Then $\sqinter(\kappa)\subseteq Y\times Y$ and, 
 since $Y\times Y$ is $\omega$-closed, 
 $$\kappa \cong \ocl(\sqinter(\kappa))\subseteq Y\times Y.$$ 
 Since $\Delta \subseteq_{\omega}\kappa$, 
 we have $Y \sim X$. It follows that $\Delta \subseteq_{\omega} \sqinter(\kappa)$.
 \end{proof}

\noindent {\bf Remark. } It is easy to see that 
if $\kappa\subseteq X\times X$ is a positivity domain and $\lambda$ $=$ $\ocl(\sqinter(\kappa))$ 
then the operator system $\cl S(\lambda)$ coincides with 
the weak* closure of the linear span of $[\cl S_1^+(\kappa)]$.
Thus, the square interior of $\kappa$ can be viewed as a 
measure of the quantity of the positive rank one operators in $\cl S(\kappa)$.

\section{Partially defined Schur multipliers}\label{s_e}

Let $(X,\mu)$ be a standard measure space. 
Recall that a function $\nph \in L^{\infty}(X\times X)$
is called a \emph{Schur multiplier} if the map $S_{\nph}$ defined on
the space of all Hilbert-Schmidt operators on $H = L^2(X,\mu)$ by
\begin{equation}\label{eq_nphk}
S_{\nph}(T_k) = T_{\nph  k}, \ \ \ k\in L^2(X\times X),
\end{equation}
is bounded in the operator norm.

If $\nph$ is a Schur multiplier, we denote again by $S_{\nph}$ the
bounded weak* continuous extension of this map to $\cl B(H)$; note
that $S_{\nph}$ is a \emph{masa-bimodule map} in the sense that
$S_{\nph}(BTA) = BS_{\nph}(T)A$, for all $T\in \cl B(H)$, $A,B\in \cl
D$. By~\cite[Theorem 2.1]{smith}, $S_{\nph}$ is automatically completely bounded.

We denote the set of all Schur multipliers on $X\times X$ by
$\frak{S}(X)$.  A Schur multiplier $\nph$ is called
\emph{positive} if the map $S_{\nph} : \cl B(H)\to \cl B(H)$ is
positive. In this case, $S_{\nph}$ is automatically completely
positive (see~\cite[Theorem 2.1]{smith} and~\cite[Lemma 4.3]{naomi_tt}). We write
$\frak S(X)^+$ for the set of positive Schur multipliers on~$X\times
X$.

We record the following well-known fact, which follows from results of
Haagerup~\cite{haag} and Smith~\cite{smith}.

\begin{theorem}\label{th_schur}
  Let $\Phi : \cl B(H) \to \cl B(H)$ be a linear map. The following are equivalent:
  \begin{enumerate}[(i)]
  \item  $\Phi$ is a bounded weak* continuous $\cl D$-bimodule
    map;
  \item  $\Phi$ is a completely bounded weak* continuous $\cl
    D$-bimodule map;
  \item  $\Phi = S_{\nph}$ for some $\nph \in \frak{S}(X)$;
  \item  there exist families $(a_i)_{i\in \bb{N}}, (b_i)_{i\in \bb{N}}\subseteq L^{\infty}(X)$
  such that 
  $$\esssup_{x\in X} \sum_{i=1}^{\infty} |a_i(x)|^2 < \infty, \ \ 
  \esssup_{y\in X} \sum_{i=1}^{\infty} |b_i(y)|^2 < \infty$$ 
  and 
  $$\nph(x,y) = \sum_{i=1}^{\infty} a_i(x) b_i(y), \ \ \mbox{ a.e. } (x,y)\in X\times X.$$
  \end{enumerate}
\end{theorem}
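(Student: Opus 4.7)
The plan is to close the cyclic chain (iv)$\Rightarrow$(iii)$\Rightarrow$(ii)$\Rightarrow$(i)$\Rightarrow$(iv), leaning heavily on Smith's automatic complete boundedness result and Haagerup's representation theorem (both cited in the statement).

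For (iv)$\Rightarrow$(iii), I would start by observing that if $\nph(x,y)=\sum_i a_i(x)b_i(y)$ with the stated bounds $C_1=\esssup_x\sum_i|a_i(x)|^2$ and $C_2=\esssup_y\sum_i|b_i(y)|^2$, then for $k\in L^2(X\times X)$ and $\xi,\eta\in H$ one computes
\[
(T_{\nph k}\xi,\eta)=\sum_{i=1}^\infty (T_k M_{\overline{b_i}}\xi,M_{\overline{a_i}}\eta).
\]
A Cauchy--Schwarz estimate on this sum gives $|(T_{\nph k}\xi,\eta)|\le \|T_k\|\,C_1^{1/2}C_2^{1/2}\|\xi\|\|\eta\|$, so $S_\nph$ is bounded on the Hilbert--Schmidt operators in operator norm; in particular $\nph\in L^\infty(X\times X)$ (take rank one operators), and $\nph$ is a Schur multiplier.

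For (iii)$\Rightarrow$(ii), the weak* continuous extension of $S_\nph$ to $\B(H)$ is weak* continuous by construction, and the $\cl D$-bimodule property is checked directly on Hilbert--Schmidt operators (where $\nph(x,y)\,(a(y)k(y,x)b(x))=a(y)(\nph k)(y,x)b(x)$ a.e.\ for $a,b\in L^\infty(X)$) and then transferred to all of $\B(H)$ by weak* density. Complete boundedness is Smith's theorem~\cite{smith}: any bounded $\cl D$-bimodule map from $\B(H)$ to $\B(H)$ with $\cl D$ a masa is automatically completely bounded, with $\|\Phi\|_{\cb}=\|\Phi\|$. The implication (ii)$\Rightarrow$(i) is trivial.

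The serious step is (i)$\Rightarrow$(iv). I would first upgrade (i) to (ii) via Smith's theorem. Then I would invoke Haagerup's characterisation~\cite{haag}: a weak* continuous completely bounded $\cl D$-bimodule map $\Phi\colon\B(H)\to\B(H)$ is of the form $\Phi(T)=\sum_i M_{a_i}TM_{b_i}$ (sum in the weak* sense) for families $(a_i),(b_i)\subseteq L^\infty(X)$ with the stated column/row sum bounds; testing this formula on the rank one Hilbert--Schmidt operator with kernel $\xi(y)\overline{\eta(x)}$ then gives $\nph(x,y)=\sum_i a_i(x)b_i(y)$ a.e. The main obstacle is this last implication: proving it from scratch requires the Christensen--Sinclair/Wittstock decomposition of cb bimodule maps together with a diagonal projection argument to extract the multiplicative form. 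Since both Smith's and Haagerup's results are standard and already cited, I would present the proof as a careful assembly of these two ingredients rather than reprove them.
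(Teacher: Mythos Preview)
The paper does not actually prove this theorem: it is introduced with the sentence ``We record the following well-known fact, which follows from results of Haagerup~\cite{haag} and Smith~\cite{smith}'' and no argument is given. Your sketch is entirely consistent with that attribution---you correctly identify Smith's automatic complete boundedness for masa-bimodule maps as the engine behind (i)$\Leftrightarrow$(ii) and (iii)$\Rightarrow$(ii), and Haagerup's representation theorem for weak* continuous completely bounded $\cl D$-bimodule maps as the content of (i)/(ii)$\Rightarrow$(iv), with (iv)$\Rightarrow$(iii) being the elementary Cauchy--Schwarz computation you indicate.

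One small quibble: in your (iv)$\Rightarrow$(iii) formula the conjugate should sit on $a_i$ only (or on neither, depending on how you unpack the inner product), since $\nph(y,x)=\sum_i a_i(y)b_i(x)$ gives $(T_{\nph k}\xi,\eta)=\sum_i (T_k M_{b_i}\xi,\,M_{\overline{a_i}}\eta)$; this does not affect the estimate. Also note that $\nph\in L^\infty$ follows directly from Cauchy--Schwarz on the series $\sum_i a_i(x)b_i(y)$ rather than by testing on rank one operators.
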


We extend the definition of a Schur multiplier given above
to functions defined on proper subsets of $X\times X$. 
Let $\kappa\subseteq X\times X$ be a measurable set, equipped with the
induced $\sigma$-algebra and, as before, identify
$L^2(\kappa)$ with the space of all functions in $L^2(X\times X)$
supported on $\kappa$.

\begin{definition}\label{d_sch}
  Let $\kappa\subseteq X\times X$ be a measurable subset generated by rectangles.
  A measurable function $\nph : \kappa \to \bb{C}$ will be called a
  \emph{Schur multiplier} if there exists $C > 0$ such that
  $\|T_{\nph k}\| \leq C \|T_{k}\|$, for every $k\in
  L^2(\kappa)$.
\end{definition}

Let 
$$\cl S_0(\kappa) = \overline{\{T_k :  k \in L^2(\kappa)\}}^{\|\cdot\|};$$ 
thus, $\cl S_0(\kappa)\subseteq \cl S(\kappa) \cap \cl K(H)$ and 
$\overline{\cl S_0(\kappa)}^{w^*} = \cl S(\kappa)$. 
If $\kappa$ is a positivity domain, we equip 
$\cl S_0(\kappa)$ with the structure of a matrix ordered space (see {\it e.g.} \cite{werner}), 
arising from its inclusion into the operator system $\cl S(\kappa)$;
thus, it makes sense to talk about positive or 
completely positive maps on $\cl S_0(\kappa)$.

For functions $\nph,\psi : \kappa\to \bb{C}$, we write $\nph\sim\psi$
if $\{(x,y) : \nph(x,y)\neq \psi(x,y)\}$ is a null set.

\begin{proposition}\label{p_chsch}
Let $\kappa\subseteq X\times X$ be generated by rectangles and let
$\nph : \kappa\to \bb{C}$ be a  measurable function. The following are equivalent:
\begin{enumerate}[(i)]
\item $\nph$ is a Schur multiplier;

\item there exists a Schur multiplier $\psi : X\times X\to \bb{C}$
  such that $\psi|_{\kappa} \sim \nph$;

\item there exists a unique completely bounded map 
$\Phi_0 : \cl S_0(\kappa) \to \cl S_0(\kappa)$ such that 
$\Phi_0(T_k) = T_{\nph k}$, for each $k\in L^2(\kappa)$;

\item there exists a unique completely bounded weak* continuous map
$\Phi : \cl S(\kappa) \to \cl S(\kappa)$ such that $\Phi(T_k) =
  T_{\nph k}$, for each $k\in L^2(\kappa)$.
\end{enumerate}
\end{proposition}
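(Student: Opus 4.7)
The plan is to argue the cycle (ii)~$\Rightarrow$~(iv)~$\Rightarrow$~(iii)~$\Rightarrow$~(i)~$\Rightarrow$~(ii); the first three implications are largely formal, and all the substantive content sits in the last arrow.

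For (ii)~$\Rightarrow$~(iv), fix a Schur multiplier $\psi \in \frak{S}(X)$ extending $\nph$. By Theorem~\ref{th_schur}, $S_\psi : \cl B(H) \to \cl B(H)$ is completely bounded, weak*-continuous, and a $\cl D$-bimodule map. For any $k \in L^2(\kappa)$, the kernel $\psi k$ agrees with $\nph k$ almost everywhere and is supported on $\kappa$, whence $S_\psi(T_k) = T_{\nph k} \in \cl S(\kappa)$. Since $\{T_k : k \in L^2(\kappa)\}$ is weak*-dense in $\cl S(\kappa)$ and $\cl S(\kappa)$ is weak*-closed, weak*-continuity of $S_\psi$ yields $S_\psi(\cl S(\kappa)) \subseteq \cl S(\kappa)$, and the restriction $\Phi := S_\psi|_{\cl S(\kappa)}$ is the map sought, with uniqueness forced by weak*-density. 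The implication (iv)~$\Rightarrow$~(iii) follows by restricting $\Phi$ to $\cl S_0(\kappa)$: each $T_k$ (with $k \in L^2(\kappa)$) is sent to $T_{\nph k} \in \cl S_0(\kappa)$, and norm-continuity propagates this to the full norm closure. The implication (iii)~$\Rightarrow$~(i) is immediate from the estimate $\|T_{\nph k}\| = \|\Phi_0(T_k)\| \leq \|\Phi_0\|_{\cb}\,\|T_k\|$.

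For the main direction (i)~$\Rightarrow$~(ii), I would proceed in three stages. First, the rule $T_k \mapsto T_{\nph k}$ is well-defined on the norm-dense subset $\{T_k : k \in L^2(\kappa)\}$ of $\cl S_0(\kappa)$ (since $k_1 = k_2$ a.e.\ implies $\nph k_1 = \nph k_2$ a.e.) and is bounded by (i), so it extends by norm-continuity to a bounded linear map $S_\nph^0 : \cl S_0(\kappa) \to \cl S_0(\kappa)$. Second, the second-dual argument used to prove Lemma~\ref{r_exte}(ii) applies verbatim in the measure-theoretic setting -- since $\cl S_0(\kappa) \subseteq \cl K(H)$ has weak*-closure $\cl S(\kappa)$ -- producing a weak*-continuous bounded extension $\Phi : \cl S(\kappa) \to \cl S(\kappa)$. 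A direct check of the identity $\Phi(M_a T M_b) = M_a \Phi(T) M_b$ on Hilbert-Schmidt operators, propagated by weak*-continuity, shows that $\Phi$ is a $\cl D$-bimodule map. Third, I would extend $\Phi$ to a bounded weak*-continuous $\cl D$-bimodule map $\tilde\Phi : \cl B(H) \to \cl B(H)$; Theorem~\ref{th_schur} would then give $\tilde\Phi = S_\psi$ for some $\psi \in \frak{S}(X)$, and the identity $S_\psi(T_k) = T_{\nph k}$ for $k \in L^2(\kappa)$ would force $\psi|_\kappa \sim \nph$.

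The hard part will be the third stage: extending a bounded weak*-continuous $\cl D$-bimodule map from the weak*-closed masa-bimodule $\cl S(\kappa)$ to all of $\cl B(H)$ with these structural properties intact. One natural route is to first upgrade $\Phi$ to a completely bounded map (Smith's theorem ensures that bounded equals CB for weak*-continuous masa-bimodule maps into $\cl B(H)$), and then invoke a Wittstock-type extension theorem for CB module maps. An alternative, perhaps more concrete, strategy is to derive a Haagerup-style factorisation $\nph(x,y) = \sum_i a_i(x)\, b_i(y)$ almost everywhere on $\kappa$, with $(a_i), (b_i) \subseteq L^\infty(X)$ satisfying the bounds of Theorem~\ref{th_schur}(iv), and then define $\psi$ by the same series on all of $X \times X$.
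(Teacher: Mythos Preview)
Your cycle and the easy implications (ii)$\Rightarrow$(iv)$\Rightarrow$(iii)$\Rightarrow$(i) match the paper's treatment. The difference lies in (i)$\Rightarrow$(ii), where your third stage contains a genuine gap and your second stage turns out to be an unnecessary detour.

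The gap is this: a Wittstock-type module extension (as in \cite[Exercise~8.6(ii)]{Pa}) will indeed give you a completely bounded $\cl D$-bimodule map $\tilde\Phi : \cl B(H) \to \cl B(H)$ extending your $\Phi$, but there is no reason for $\tilde\Phi$ to be weak*-continuous, and Theorem~\ref{th_schur} requires weak*-continuity in order to identify $\tilde\Phi$ with some $S_\psi$. So the sentence ``Theorem~\ref{th_schur} would then give $\tilde\Phi = S_\psi$'' does not follow as written. The paper avoids this by performing the extension one level down: it applies Smith's theorem already to the map $S_\nph^0 : \cl S_0(\kappa) \to \cl B(H)$ (Smith's theorem needs only the $\cl D$-bimodule property, not weak*-continuity), then extends via \cite[Exercise~8.6(ii)]{Pa} to a completely bounded $\cl D$-bimodule map $\Psi_0 : \cl K(H) \to \cl B(H)$, and only \emph{afterwards} invokes Lemma~\ref{r_exte}(iii) to pass to a weak*-continuous extension $\Psi : \cl B(H) \to \cl B(H)$. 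Now Theorem~\ref{th_schur} applies cleanly.

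Your route can be salvaged by restricting your (possibly non-weak*-continuous) $\tilde\Phi$ back to $\cl K(H)$ and re-extending via Lemma~\ref{r_exte}(iii); the resulting weak*-continuous map still agrees with $S_\nph^0$ on Hilbert-Schmidt operators with kernel in $L^2(\kappa)$, which is all you need for $\psi|_\kappa \sim \nph$. But once you do this, the passage through $\cl S(\kappa)$ in your stage~2 has contributed nothing --- you have essentially reproduced the paper's argument with an extra loop.
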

\begin{proof}
(i)$\Rightarrow$(ii)
Let 
$$\cl S_2(\kappa) = \{T_k : k \in L^2(\kappa)\}.$$
Since $\nph$ is a Schur multiplier, the map 
$\Phi_2 : \cl S_2(\kappa)\to \cl S_2(\kappa)$, given by 
$\Phi_2(T_k) = T_{\nph k}$, extends to a bounded linear map
$\Phi_0 : \cl S_0(\kappa)\to \cl S_0(\kappa)$. Moreover, since $\Phi_2$ is 
a $\cl D$-bimodule map, $\Phi_0$ is such as well. 
By Smith's theorem~\cite[Theorem 2.1]{smith}, $\Phi_0$ is completely bounded. 
By~\cite[Exercise 8.6 (ii)]{Pa}, there exists a completely bounded 
$\cl D$-bimodule map $\Psi_0 : \cl K(H)\to \cl B(H)$ such that 
$\Psi_0|_{\cl S_0(\kappa)} = \Phi_0$. Let 
$\Psi$ be the weak* continuous extension of $\Psi_0$ whose existence is guaranteed by Lemma \ref{r_exte} (iii);
we have that $\Psi$ is a completely bounded weak* continuous $\cl D$-bimodule map.
By Theorem~\ref{th_schur}, there exists 
$\psi\in \frak{S}(X)$ such that $\Psi = S_{\psi}$. 
For every $k\in L^2(\kappa)$ we have 
$$T_{\psi k} = S_{\psi}(T_k) = S_{\nph}(T_k) = T_{\nph k}.$$
It follows that 
$$\psi k \sim \nph k, \mbox{ for each } k\in L^2(\kappa),$$
and hence $\psi|_{\kappa}\sim \nph$.

(ii)$\Rightarrow$(iv) Take $\Phi = S_{\psi}|_{\cl S(\kappa)}$. 
The uniqueness of $\Phi$ follows from the fact that 
the Hilbert-Schmidt operators with integral kernels in $L^2(\kappa)$ 
are dense in $\cl S(\kappa)$.

(iv)$\Rightarrow$(iii)$\Rightarrow$(i) are trivial. 
\end{proof}

  If~$\nph\colon \kappa\to \mathbb C$ is a Schur multiplier, then we write
  $S_\nph\colon \cl S(\kappa)\to \cl S(\kappa)$ for the map~$\Phi$
  appearing in~(iv) above, and let $S_{\nph}^0$ be the restriction of $S_{\nph}$ to $\cl S_0(\kappa)$.

\begin{definition}\label{d_fp}
Let $\kappa\subseteq X\times X$ be a positivity domain.
A Schur multiplier $\nph : \kappa \to \bb{C}$ will be called \emph{partially positive}
if $\nph|_{\alpha\times\alpha}$ is a positive Schur multiplier
whenever $\alpha\subseteq X$ is a measurable set with 
$\alpha\times\alpha\subseteq \kappa$.
\end{definition}

We can characterise partial positivity in this context using rank one operators,
extending \cite[Lemma~4.2]{pps}, as follows.

\begin{proposition}\label{p_ppsch}
Let $\kappa$ be a positivity domain. A Schur multiplier
$\nph : \kappa\to \bb{C}$ is partially positive if and only if 
$S_{\nph}(\cl S_1^+(\kappa))\subseteq \cl B(H)^+$.
\end{proposition}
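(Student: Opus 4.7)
The plan is to reduce both implications to comparing the action of $S_\nph$ on $\cl S(\alpha\times\alpha)$ with the Schur multiplier map associated with $\nph|_{\alpha\times\alpha}$, for measurable squares $\alpha\times\alpha\subseteq_{\omega}\kappa$. Proposition~\ref{th_ro}(i) is the bridge that connects positive rank one operators in $\cl S(\kappa)$ with such squares: a rank one operator $\xi\otimes\eta^*$ lies in $\cl S(\kappa)$ precisely when $(\supp\xi)\times(\supp\eta)\subseteq_{\omega}\kappa$, and for positive rank one operators $\xi\otimes\xi^*$ this gives a square.

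For the forward direction, given $R\in\cl S_1^+(\kappa)$, I would write $R=\xi\otimes\xi^*$ and set $\alpha=\supp\xi$. Proposition~\ref{th_ro}(i) then yields $\alpha\times\alpha\subseteq_{\omega}\kappa$, so partial positivity of $\nph$ gives that $\nph|_{\alpha\times\alpha}$ is a positive Schur multiplier on $\alpha\times\alpha$; in particular $S_{\nph|_{\alpha\times\alpha}}(R)$ is a positive operator on $L^2(\alpha)$. A direct comparison of integral kernels identifies $S_\nph(R)$ with $S_{\nph|_{\alpha\times\alpha}}(R)$ via the natural inclusion $\cl B(L^2(\alpha))\hookrightarrow \cl B(H)$, whence $S_\nph(R)\in\cl B(H)^+$.

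For the reverse direction, fix measurable $\alpha\subseteq X$ with $\alpha\times\alpha\subseteq\kappa$. The inclusion $L^2(\alpha\times\alpha)\subseteq L^2(\kappa)$ shows that $\nph|_{\alpha\times\alpha}$ inherits the Schur multiplier property from $\nph$. To establish positivity of the associated map on $\cl B(L^2(\alpha))$, take $T\in\cl B(L^2(\alpha))^+$; compressing $T$ by an increasing sequence of finite-dimensional projections and invoking the spectral theorem, I would write $T$ as a weak* limit of finite sums $\sum_i \xi_i\otimes\xi_i^*$ with $\xi_i\in L^2(\alpha)$. Each such summand lies in $\cl S_1^+(\alpha\times\alpha)\subseteq\cl S_1^+(\kappa)$, so the hypothesis yields $S_\nph(\xi_i\otimes\xi_i^*)\geq 0$; weak* continuity of $S_\nph$ (see Proposition~\ref{p_chsch}(iv)) then forces $S_\nph(T)\geq 0$, and the kernel identification above shows this positive operator equals $S_{\nph|_{\alpha\times\alpha}}(T)$ under the compression isomorphism.

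The only non-routine point is the kernel identification of $S_\nph|_{\cl S(\alpha\times\alpha)}$ with $S_{\nph|_{\alpha\times\alpha}}$, which is immediate on Hilbert--Schmidt operators (where both sides are $T_{\nph k}$ with $k\in L^2(\alpha\times\alpha)$) and extends to all of $\cl S(\alpha\times\alpha)$ by weak* density and weak* continuity. Once this bookkeeping is settled, both directions are essentially a transparent combination of Proposition~\ref{th_ro}(i), the spectral theorem, and the weak* continuity of $S_\nph$.
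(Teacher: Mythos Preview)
Your proposal is correct and follows essentially the same approach as the paper's proof: both directions hinge on Proposition~\ref{th_ro}(i) to pass between positive rank one operators in $\cl S(\kappa)$ and measurable squares $\alpha\times\alpha\subseteq_\omega\kappa$, together with the weak* continuity of $S_\nph$ and the weak* density of positive rank ones in $\cl B(P(\alpha)H)^+$. One small technicality you glossed over in the forward direction: partial positivity (Definition~\ref{d_fp}) is stated for $\alpha\times\alpha\subseteq\kappa$, not $\subseteq_\omega\kappa$, so after Proposition~\ref{th_ro}(i) gives the marginal containment you should delete a null set from $\alpha$ before invoking it---the paper makes this step explicit.
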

\begin{proof}
Suppose that $S_{\nph}(\cl S_1^+(\kappa))\subseteq \cl B(H)^+$
and that $\alpha\times\alpha\subseteq \kappa$ for a measurable set $\alpha\subseteq X$. 
If $T$ is a positive rank one operator supported by $\alpha\times\alpha$ then, 
by assumption, $S_{\nph}(T)\geq 0$. Since $S_{\nph}$ is weak* continuous and 
the weak* closed span of the positive rank one operators supported by $\alpha\times\alpha$
equals $\cl B(P(\alpha)H)^+$, we conclude that $\nph|_{\alpha\times\alpha}$ is a 
positive Schur multiplier.

Conversely, suppose that $\nph$ is partially positive and that $T\in \cl S(\kappa)$ is a positive 
rank one operator, say $T = \eta\otimes\eta^*$ for some $\eta\in H$. 
If $\supp\eta = \alpha$ then, by Proposition~\ref{th_ro}, $\alpha\times\alpha\subseteq_{\omega} \kappa$. 
By deleting a null set from $\alpha$, we may in fact suppose that 
$\alpha\times\alpha\subseteq \kappa$.
The assumption now implies that $S_{\nph}(T) \geq 0$. 
\end{proof}

We note that the main interest in Proposition \ref{p_ppsch} is when 
$\kappa$ is generated by squares; however, we formulate it in 
its present generality in view of Theorem \ref{th_posr}.

\begin{definition}\label{d_pc}
Let $\kappa\subseteq X\times X$ be a positivity domain and 
$\nph : \kappa\to \bb{C}$ be a Schur multiplier. 
We say that a measurable function $\psi : X\times X\to \bb{C}$ is 
a \emph{positive extension} of $\nph$ if $\psi$ is a positive Schur multiplier
and $\psi|_{\kappa}\sim \nph$.
\end{definition}

\noindent {\bf Remarks (i)} 
If a Schur multiplier $\nph : \kappa\to \bb{C}$ has a positive extension
then $\nph$ is necessarily partially positive. 

\smallskip

{\bf (ii)} 
Recall~\cite{eks} that a function $\nph : X\times X\to \bb{C}$ is called 
$\omega$-continuous if $\nph^{-1}(U)$ is an $\omega$-open set 
for every open subset $U\subseteq \bb{C}$. 
We note that if $\nph : \kappa\to \bb{C}$ has a positive extension, then
$\nph$ has an $\omega$-continuous positive extension. This follows from the fact that
if $\psi : X\times X\to \bb{C}$ is a positive Schur multiplier then there exists an 
$\omega$-continuous positive Schur multiplier $\psi' : X\times X\to \bb{C}$
such that $\psi$ and $\psi'$ differ on a set of product measure zero (see~\cite[Corollary 4.5]{naomi_tt}).

\begin{theorem}\label{th_ns}
Let $\kappa$ be a positivity domain. 
The following are equivalent, for a  partially positive Schur multiplier $\nph : \kappa\to \bb{C}$:
\begin{enumerate}[(i)]
\item $\nph$ has a positive extension;

\item the map $S_{\nph} : \cl S(\kappa) \to \cl S(\kappa)$ is positive;

\item the map $S_{\nph} : \cl S(\kappa) \to \cl S(\kappa)$ is completely positive;

\item the map $S^0_{\nph} : \cl S_0(\kappa) \to \cl S_0(\kappa)$ is positive;

\item the map $S^0_{\nph} : \cl S_0(\kappa) \to \cl S_0(\kappa)$ is completely positive.
\end{enumerate}
\end{theorem}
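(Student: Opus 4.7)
The implications (iii)$\Rightarrow$(ii), (iii)$\Rightarrow$(v), (v)$\Rightarrow$(iv), and (ii)$\Rightarrow$(iv) are immediate, since $\cl S_0(\kappa)\subseteq\cl S(\kappa)\subseteq\cl B(H)$ inherits its matrix order from each of its containers, and restricting a (completely) positive map to a sub-operator space preserves the relevant positivity. For (i)$\Rightarrow$(iii), given a positive extension $\psi$ of $\nph$, the induced map $S_\psi\colon\cl B(H)\to\cl B(H)$ is completely positive; for every $k\in L^2(\kappa)$ we have $\psi k\sim\nph k$, whence $S_\psi(T_k)=T_{\nph k}=S_\nph(T_k)$. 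Since $\{T_k:k\in L^2(\kappa)\}$ is weak*-dense in $\cl S(\kappa)$ and both $S_\psi|_{\cl S(\kappa)}$ and $S_\nph$ are weak*-continuous, they coincide on $\cl S(\kappa)$, so $S_\nph$ inherits complete positivity from $S_\psi$.

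The substance of the proof is the implication (iv)$\Rightarrow$(i), which I would handle in two steps. First, I would upgrade (iv) to complete positivity of $S_\nph^0$: since $S_\nph^0$ is a $\cl D$-bimodule map and $\cl S_0(\kappa)$ carries its matrix order from the operator system $\cl S(\kappa)$, positivity forces complete positivity by an adaptation of the masa-bimodule argument of \cite[Lemma~4.3]{naomi_tt}. Second, I would extend the resulting completely positive $\cl D$-bimodule map $S_\nph^0\colon\cl S_0(\kappa)\to\cl S_0(\kappa)\subseteq\cl K(H)$ first to a completely positive $\cl D$-bimodule map $\Psi_0\colon\cl K(H)\to\cl B(H)$ (combining the Arveson/Werner matrix-ordered extension theorem with the completely bounded bimodule extension of \cite[Exercise~8.6(ii)]{Pa}), and then via Lemma~\ref{r_exte}(iii) to a weak*-continuous, completely positive $\cl D$-bimodule map $\Psi\colon\cl B(H)\to\cl B(H)$. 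By Theorem~\ref{th_schur}, $\Psi=S_\psi$ for some Schur multiplier~$\psi\in\frak S(X)$, and $\psi\in\frak S(X)^+$ because $\Psi$ is completely positive. Finally, the identity $T_{\psi k}=\Psi(T_k)=S_\nph^0(T_k)=T_{\nph k}$ valid for all $k\in L^2(\kappa)$ forces $\psi|_\kappa\sim\nph$, so $\psi$ is a positive extension of $\nph$.

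The main obstacle is the simultaneous preservation of complete positivity and the $\cl D$-bimodule property through the extension step. Standard Wittstock extension preserves the bimodule structure but only yields complete boundedness, whereas standard Arveson extension preserves complete positivity but can destroy the bimodule property. The delicate technical point is to arrange both properties at once, either by appealing to a bimodule-Arveson extension theorem in the vein of \cite{Pa}, or by an averaging argument over the unitary group of $\cl D$ applied to a bare Arveson extension; showing that this averaging process converges (in the weak* topology) and returns a map still restricting to $S_\nph^0$ on $\cl S_0(\kappa)$ is where the bulk of the technical care must be spent.
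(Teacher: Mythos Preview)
Your treatment of the easy implications and of (iv)$\Rightarrow$(v) via \cite[Lemma~4.3]{naomi_tt} matches the paper. The gap is precisely where you locate it, in (v)$\Rightarrow$(i): you leave the simultaneous ``completely positive $+$ $\cl D$-bimodule'' extension unresolved, and your suggested fixes are problematic. The unitary group of $\cl D = L^\infty(X,\mu)$ is not compact, so an averaging argument would require an invariant mean rather than a Haar integral; the resulting map is then only a weak* cluster point of a net of completely positive maps, and there is no reason it should remain weak* continuous or even restrict to $S_\nph^0$ on $\cl S_0(\kappa)$. Likewise, no off-the-shelf ``bimodule Arveson theorem'' is available here, and combining \cite[Exercise~8.6(ii)]{Pa} with a Werner-type extension does not obviously yield a single map enjoying both properties.

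The paper sidesteps the obstacle by \emph{not} trying to preserve the bimodule property through the extension. It first adjoins a unit, extending $S_\nph^0$ to a completely positive map on the genuine operator system $\cl S_0(\kappa) + \bb{C}I$ via \cite[Theorem~3.16 and Lemma~3.12]{r}. It then applies plain Arveson to obtain a completely positive extension $\Psi_1 : \cl K(H) + \bb{C}I \to \cl B(H)$, restricts to $\cl K(H)$, and passes to a weak*-continuous completely positive map $\tilde\Psi : \cl B(H)\to\cl B(H)$ via Lemma~\ref{r_exte}(iii) --- with no bimodule property claimed at this stage. The key observation is that the restriction $\tilde\Phi := \tilde\Psi|_{\cl S(\kappa)}$ is nevertheless a $\cl D$-bimodule map, since it is weak* continuous and agrees with the bimodule map $S_\nph^0$ on the weak*-dense subspace $\cl S_0(\kappa)$. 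One then invokes \cite[Exercise~7.4]{Pa}: any completely positive extension of a $\cl D$-bimodule map defined on an operator system containing $\cl D$ is automatically a $\cl D$-bimodule map (a Stinespring/multiplicative-domain argument, using that $\tilde\Phi(I)\in \cl D' = \cl D$). Hence $\tilde\Psi$ is a $\cl D$-bimodule map after all, and Theorem~\ref{th_schur} finishes exactly as in your sketch. The insight you are missing is that the bimodule property comes for free \emph{a posteriori}, so the delicate simultaneous extension you worry about is unnecessary.
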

\begin{proof}
(i)$\Rightarrow$(ii) If $\psi$ is a positive extension of $\nph$ then 
$S_{\psi}$ is positive and hence so is its restriction to $\cl S(\kappa)$;
it is easily seen that this restriction coincides with $S_{\nph}$. 

(ii)$\Rightarrow$(iii) and 
(iv)$\Rightarrow$(v)
follow from the operator system version of 
R. R. Smith's theorem~\cite{smith} on automatic complete boundedness
(see~\cite[Lemma 4.3]{naomi_tt}). 

(iii)$\Rightarrow$(iv) is trivial.

(v)$\Rightarrow$(i) 
Let $\Phi = S_{\nph}^0$; thus, $\Phi$ is a completely positive linear map on $\cl S_0(\kappa)$. 
By \cite[Theorem 3.16 and Lemma 3.12]{r}, 
$\Phi$ can be extended to a completely positive 
map $\Phi_1$ on the operator system $\cl S_0(\kappa) + \bb{C}I$. 
By Arveson's Extension Theorem, 
there exists a completely positive map $\Psi_1 : \cl K(H) + \bb{C}I\to \cl B(H)$ extending $\Phi_1$. 
The restriction $\Psi$ of $\Psi_1$ to $\cl K(H)$ is then a completely positive 
extension of $\Phi$. 
Let $\tilde{\Psi} : \cl B(H) \to \cl B(H)$ be the completely positive weak* 
continuous extension of $\Psi$ whose existence is guaranteed by Lemma \ref{r_exte} (iii).
Let $\tilde{\Phi}$ be the restriction of $\tilde{\Psi}$ to $\cl S(\kappa)$; 
thus, $\tilde{\Phi}$ is a weak* continuous extension of $\Phi$, and since 
$\Phi$ is a $\cl D$-bimodule map, the same holds true for $\tilde{\Phi}$. 
We now have that $\tilde{\Psi}$ is a completely positive extension of 
$\tilde{\Phi}$; by~\cite[Exercise 7.4]{Pa}, $\tilde{\Psi}$ is a 
$\cl D$-bimodule map. 
By Theorem~\ref{th_schur}, there exists $\psi\in \frak{S}(X)$ such that 
$\tilde{\Psi} = S_{\psi}$; the function $\psi$ is the desired positive extension of $\nph$. 
\end{proof}

The next theorem is a measurable version of one of the main results of 
\cite{pps} concerning positive completions of partially positive matrices.
Recall that the projective tensor product 
\[\cl T(X) = L^2(X) \hat{\otimes } L^2(X)\]
can be canonically identified
with the predual of $\cl B(H)$. Indeed, each element $h\in
\cl T(X)$ can be written as a series $h = \sum_{i=1}^{\infty} f_i\otimes g_i$, 
where $\sum_{i=1}^{\infty} \|f_i\|_2^2 < \infty$
and $\sum_{i=1}^{\infty} \|g_i\|_2^2 < \infty$, and the duality pairing is then given
by 
\[\langle T,h\rangle = \sum_{i=1}^{\infty} (Tf_i,\overline{g_i}), \ \ T\in \cl B(H).\] 
It follows~\cite{a} that $h$ can be identified with a complex function
on $X\times X$, defined up to a marginally null set, and given by
$h(x,y) = \sum_{i=1}^{\infty} f_i(x)g_i(y)$. 
The positive cone $\cl T(X)^+$ consists of all 
functions $h\in \cl T(X)$ that define positive functionals on $\cl B(H)$,
that is, functions $h$ of the form 
$h = \sum_{i=1}^{\infty} f_i\otimes \overline{f_i}$,  where $\sum_{i=1}^{\infty} \|f_i\|_2^2 < \infty$.
It is well-known that a function
$\nph \in L^{\infty}(X\times X)$ is a Schur multiplier
if and only if $\nph h$ is equivalent (with respect to the product measure) to a
function contained in $\cl T(X)$ for every $h\in \cl T(X)$ (see \cite{peller}).

\begin{theorem}\label{th_posr}
Let $\kappa$ be a positivity domain. The following are equivalent:
\begin{enumerate}[(i)]
\item every partially positive Schur multiplier $\nph : \kappa\to \bb{C}$ has a positive extension;

\item $\cl S(\kappa)^+ = \overline{[\cl S_1^+(\kappa)]}^{w^*}$;

\item $\cl S_0(\kappa)^+ = \overline{[\cl S_1^+(\kappa)]}^{\|\cdot\|}$.
\end{enumerate}
\end{theorem}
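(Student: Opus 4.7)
The plan is to close the equivalence via the cycle (iii)$\Rightarrow$(i)$\Rightarrow$(ii)$\Rightarrow$(iii). The first and last implications are essentially immediate consequences of the machinery developed in the previous sections, while (i)$\Rightarrow$(ii) carries all the genuine content. For (iii)$\Rightarrow$(i), given a partially positive Schur multiplier $\nph:\kappa\to\bb C$, Proposition~\ref{p_ppsch} gives $S_\nph^0(\cl S_1^+(\kappa))\subseteq\cl B(H)^+$; linearity then yields $S_\nph^0([\cl S_1^+(\kappa)])\subseteq\cl B(H)^+$, and norm continuity of $S_\nph^0$ (Proposition~\ref{p_chsch}) together with norm closedness of $\cl B(H)^+$ extend this to $S_\nph^0(\overline{[\cl S_1^+(\kappa)]}^{\|\cdot\|})\subseteq\cl B(H)^+$. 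Condition (iii) identifies the latter closure with $\cl S_0(\kappa)^+$, so $S_\nph^0$ is positive and Theorem~\ref{th_ns} produces the desired positive extension. The analogous argument using weak$^*$ continuity yields the parallel implication (ii)$\Rightarrow$(i).

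For (ii)$\Rightarrow$(iii), the completely isometric weak$^*$-homeomorphism $\iota^{**}\colon\cl S_0(\kappa)^{**}\to\cl S(\kappa)$ of Lemma~\ref{r_exte} carries the weak$^*$ topology of $\cl S(\kappa)\cong\cl S_0(\kappa)^{**}$ to the bidual weak$^*$ topology, and this restricts on $\cl S_0(\kappa)$ to its weak topology (both are induced by pairing with $\cl S_0(\kappa)^*$). Consequently, for every convex $C\subseteq\cl S_0(\kappa)$ the bipolar/Mazur theorem gives
\[
\overline{C}^{w^*}\cap\cl S_0(\kappa)=\overline{C}^{w}=\overline{C}^{\|\cdot\|}.
\]
Applying this with $C=[\cl S_1^+(\kappa)]$ and using the identity $\cl S_0(\kappa)^+=\cl S(\kappa)^+\cap\cl S_0(\kappa)$, hypothesis (ii) immediately yields (iii).

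The hard step (i)$\Rightarrow$(ii) I plan to argue by contraposition. If $T_0\in\cl S(\kappa)^+\setminus\overline{[\cl S_1^+(\kappa)]}^{w^*}$, Hahn--Banach separation against this weak$^*$ closed convex cone produces $h\in\cl T(X)$ with $\mathrm{Re}\langle T_0,h\rangle>0\ge\mathrm{Re}\langle R,h\rangle$ for every $R\in\cl S_1^+(\kappa)$; passing to $\tfrac12(h+h^*)$, where $h^*(x,y)=\overline{h(y,x)}$, I may assume $h$ is self-adjoint, so that $\langle\cdot,h\rangle$ is real on self-adjoint operators. By Proposition~\ref{th_ro}, the inequality $\langle\eta\otimes\eta^*,h\rangle\le0$ for every $\eta\in H$ with $\supp\eta\times\supp\eta\subseteq_\omega\kappa$ is precisely the statement that $-h$ is ``positive definite on $\kappa$-squares''---the kernel-level manifestation of partial positivity. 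The goal is to convert $h$ into a genuine partially positive Schur multiplier on $\kappa$ admitting no positive extension, contradicting~(i).

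The main obstacle is that $h$ lies only in $\cl T(X)=L^2(X)\hat\otimes L^2(X)$, so its kernel has merely $L^2$-regularity, whereas Schur multipliers require the $L^\infty$ tensor decomposition of Theorem~\ref{th_schur}. I would overcome this by approximating $h$ in trace norm by self-adjoint finite-rank tensor kernels $h_n=\sum_{i\le N_n}f_i^{(n)}\otimes g_i^{(n)}$ with $f_i^{(n)},g_i^{(n)}\in L^2\cap L^\infty$; each such $h_n$ corresponds via Theorem~\ref{th_schur} to a bounded Schur multiplier, and the approximation can be chosen to preserve the strict separation of $T_0$ from $[\cl S_1^+(\kappa)]$ for large $n$, yielding candidates $\nph_n:=-h_n|_\kappa$ that are partially positive in the sense of Definition~\ref{d_fp}. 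The subtlest point is translating the pairing obstruction $\langle T_0,h_n\rangle>0$ into a witness that $S_{\nph_n}(T_0)\not\ge 0$ so as to invoke Theorem~\ref{th_ns} and conclude that $\nph_n$ has no positive extension; this requires a careful analysis of the interplay between the predual pairing and the Schur multiplier action, and is where I expect the bulk of the technical work to lie.
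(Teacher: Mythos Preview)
Your implications (iii)$\Rightarrow$(i) and (ii)$\Rightarrow$(i) are correct and match the paper's. Your (ii)$\Rightarrow$(iii) via the identification $\cl S_0(\kappa)^{**}\cong\cl S(\kappa)$ and Mazur's theorem is also fine, though Lemma~\ref{r_exte} as written is for the discrete case; the same argument works here, but you should say so. (The paper avoids this step altogether by proving (i)$\Rightarrow$(ii) and (i)$\Rightarrow$(iii) simultaneously via a single Hahn--Banach separation.)

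Your sketch of (i)$\Rightarrow$(ii), however, has a genuine gap at precisely the point you flag as ``subtlest''. Two concrete problems. First, your approximation scheme cannot preserve partial positivity: you need $\langle R,-h_n\rangle\ge 0$ for \emph{every} $R\in\cl S_1^+(\kappa)$, an unbounded family, and trace-norm convergence $h_n\to h$ gives no uniform control over such $R$; a generic perturbation of $h$ will destroy the inequality for some rank-one $R$. Second, and more fundamentally, the translation between the dual pairing and the Schur action that you defer as ``the bulk of the technical work'' is not an obstacle to be battered through by approximation---it is a clean identity, and it is the missing idea. When $h$ itself is a Schur multiplier and $\mu$ is finite, a direct computation gives
\[
(S_h(\xi\otimes\xi^*)\eta,\eta)=\big\langle(\xi\eta)\otimes(\xi\eta)^*,\,h\big\rangle,
\]
so nonnegativity of $\langle\,\cdot\,,h\rangle$ on $\cl S_1^+(\kappa)$ is \emph{exactly} partial positivity of $h|_\kappa$ as a Schur multiplier. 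Hypothesis (i) then furnishes a positive extension $\tilde h\in\frak S(X)^+$, and a second identity $(S_{\tilde h}(A)\chi_X,\chi_X)=\langle A,h\rangle$ for $A\in\cl S(\kappa)$ closes the loop, since the left-hand side is nonnegative whenever $A\ge 0$.

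The paper deals with the obstacle that an arbitrary $h\in\cl T(X)$ need not be a Schur multiplier not by approximating $h$, but by \emph{restricting}: one passes to sets $Z_N\times Z_N$ on which the tensor factors of $h$ are uniformly bounded, so that $h|_{Z_N\times Z_N}$ \emph{is} a Schur multiplier by Theorem~\ref{th_schur}, runs the argument above on $(Z_N,\mu|_{Z_N})$, and lets $N\to\infty$. Restriction, unlike approximation, preserves all the positivity constraints for free.
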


\begin{proof}
Assume (i) holds. We establish 
(ii) and (iii) simultaneously.
It is clear that 
$\overline{[\cl S_1^+(\kappa)]}^{\|\cdot\|}\subseteq \cl S_0(\kappa)^+$. 
To show that the two cones are 
equal, it suffices by the Hahn-Banach Theorem to prove that if $h\in \cl T(X)$ is 
such that $\langle A,h\rangle \geq 0$ for all $A\in [\cl S_1^+(\kappa)]$, then 
$\langle A,h\rangle \geq 0$ for all $A\in \cl S(\kappa)^+$.

Thus, suppose that 
$$\langle A,h\rangle \geq 0, \ \ \ A\in [\cl S_1^+(\kappa)].$$
Then 
\[\langle \xi\otimes\xi^*, h\rangle \geq 0, \quad\text{whenever $\xi\in L^2(X)$ and $\xi\otimes\xi^* \in \cl S_1(\kappa)$,}\]
that is (see Proposition \ref{th_ro}),
$$\langle \xi\otimes\xi^*, h\rangle \geq 0, \ \ 
\mbox{whenever } (\supp\xi)\times (\supp\xi)\subseteq_{\omega} \kappa.$$

Suppose first that the measure $\mu$ is finite and $h\in \frak{S}(X)$; 
recall that $S_{h}$ denotes the corresponding bounded weak* continuous 
map on $\cl B(H)$, defined in (\ref{eq_nphk}).
For every $\eta\in L^2(X)$, we have 
$$(S_h(\xi\otimes\xi^*)\eta,\eta) = 
\int_{X\times X} h(x,y) \xi(y)\eta(y)\overline{\xi(x)\eta(x)} dx dy 
= \langle (\xi\eta)\otimes(\xi\eta)^*, h\rangle \geq 0$$
whenever $(\supp\xi)\times (\supp\xi)\subseteq_{\omega} \kappa.$
It follows that 
$S_h(\xi\otimes\xi^*)\geq 0$ for every rank one operator $\xi\otimes\xi^*$ that belongs to $\cl S(\kappa)^+$;
thus, $h|_{\kappa}$ is a partially positive Schur multiplier. 
By assumption, $h|_{\kappa}$ has an extension to a positive Schur multiplier $\tilde{h}$ on $X\times X$. 
We thus have
\begin{equation}\label{eq_htild}
(S_{\tilde{h}}(A)\chi_X,\chi_X) \geq 0, \ \ \ \ \mbox{for all } A\in \cl B(H)^+
\end{equation}
(note that $\chi_X \in L^2(X)$ since~$\mu$ is finite).
We claim that if $A\in \cl S(\kappa)$ then 
\begin{equation}\label{eq_tildht}
(S_{\tilde{h}}(A)\chi_X,\chi_X) = \langle A,h\rangle.
\end{equation}
Indeed, suppose that $k\in L^2(\kappa)$. Then 
\begin{eqnarray*}
(S_{\tilde{h}}(T_k)\chi_X,\chi_X) & = & \int_{X\times X} \tilde{h}(x,y) k(y,x) d\mu(x)d\mu(y)\\
& = & \int_{\kappa} h(x,y) k(y,x) d\mu(x)d\mu(y) = \langle T_k,h\rangle,
\end{eqnarray*}
so (\ref{eq_tildht}) holds if~$A$ is a Hilbert-Schmidt operator. 
Since $S_{\tilde{h}}$ is weak* continuous and 
the Hilbert-Schmidt operators are dense in $\cl S(\kappa)$, 
(\ref{eq_tildht}) is established.
It now follows that if $A\in \cl S(\kappa)^+$, then
$\langle A,h\rangle\geq 0$.

Next, relax the assumption that the measure $\mu$ be finite, but continue to suppose that 
$h\in \frak{S}(X)$. 
Let $(X_n)_{n\in \bb{N}}$ be an increasing sequence of meaurable sets of finite measure such that 
$X = \cup_{n\in \bb{N}} X_n$. 
By (\ref{eq_tildht}), 
\[(S_{\tilde{h}}(A)\chi_{X_n},\chi_{X_n}) = \langle P(X_n)AP(X_n),h\rangle \geq 0\]
whenever $A\in \cl S(\kappa)^+$. Passing to the limit as~$n\to \infty$, we conclude 
that $\langle A,h\rangle\geq 0$ whenever $A\in \cl S(\kappa)^+$. 

Finally, relax the assumption that $h$ be a Schur multiplier.  Write
$h = \sum_{i=1}^{\infty} f_i\otimes g_i$ where
$\sum_{i=1}^{\infty}\|f_i\|^2_2 < \infty$ and
$\sum_{i=1}^{\infty}\|g_i\|^2_2 < \infty$.  For each $N\in \bb{N}$,
let
$$X_N = \left\{x\in X : \sum_{i=1}^{\infty}|f_i(x)|^2 \leq N\right\}$$
and 
$$Y_N = \left\{y\in X : \sum_{i=1}^{\infty}|g_i(y)|^2 \leq N\right\}.$$
Then $X\setminus (\cup_{N\in \bb{N}} X_N)$ and $X\setminus (\cup_{N\in \bb{N}} Y_N)$
have measure zero. Let $Z_N = X_N\cap Y_N$, $N\in \bb{N}$. 
Then $X\setminus (\cup_{N\in \bb{N}} Z_N)$ has measure zero and, by Theorem \ref{th_schur},  
the restriction of $h$ to $Z_N\times Z_N$ is a Schur multiplier. 
By the previous paragraphs, 
$\langle P(Z_N)AP(Z_N),h\rangle\geq 0$ whenever $A\in \cl S(\kappa)^+$, 
and passing to the limit as~$N\to \infty$ shows that 
$\langle A,h\rangle \geq 0$ for all $A\in \cl S(\kappa)^+$.

(iii)$\Rightarrow$(i) and (ii)$\Rightarrow$(i)
follow from Proposition \ref{p_ppsch} and Theorem \ref{th_ns}.
\end{proof}

We note that, by Theorem~\ref{th_posr}, if a positivity domain admits positive extensions
of partially positive Schur multipliers, then it necessarily satisfies the equivalent conditions of 
Theorem~\ref{th_charomi}.

\section{Extending positive definite functions}\label{s_epdf}

In this section, we apply the results obtained previously to 
some questions arising in Abstract Harmonic Analysis. 
We assume, throughout the section, that $G$ is a 
locally compact second countable amenable group, 
unless $G$ is discrete, in which case no countability restriction is required. 
We denote by $m$ the left Haar measure on $G$.
We formulate sufficient conditions for unital symmetric 
subsets~$E\subseteq G$ which guarantee that every
positive definite function defined on $E$ can be extended to a positive definite
function defined on the whole group $G$.

Let $A(G)$ (resp. $B(G)$) be the Fourier (resp. the Fourier-Stieltjes) algebra of $G$ 
(see~\cite{eymard} for the definition and basic properties of these algebras).
Recall that $\cl T(G) = L^2(G)\hat{\otimes} L^2(G)$ 
and let $P : \cl T(G)\to A(G)$ be the contractive surjection given by 
$$P(\xi\otimes\eta) = \eta\ast \check{\xi}, \ \ \ \xi,\eta\in L^2(G)$$
(here $\ast$ denotes the usual convolution of functions and $\check\xi(s)=\xi(s^{-1})$, $s\in G$).
Let $A(G)^+$ (resp. $B(G)^+$) be the canonical positive cone of $A(G)$ 
(resp. $B(G)$). 
Recall that a function $f : G\to \bb{C}$ is called
\emph{positive definite} if for all finite subsets
$\{s_1,\dots,s_n\}\subseteq G$, the matrix $(f(s_i s_j^{-1}))_{i,j}$
is positive;
it is well-known that $B(G)^+$ coincides with 
the cone of all continuous positive definite functions on $G$.
More generally, if $E\subseteq G$ is a symmetric set (that is, $E^{-1} =
E$) containing the neutral element $e$ of $G$, we say that 
a function $f : E\rightarrow \bb{C}$ is positive definite if the matrix 
$(f(s_is_j^{-1}))_{i,j}$ is positive for any~$s_1,\dots,s_n\in G$ with
$s_is_j^{-1}\in E$ for $1\leq i,j\leq n$. 
We say that $f : G\to \bb{C}$ is positive definite on $E$ if $f|_E$ is 
positive definite. 

For a subset $E\subseteq G$, let 
$$E^* = \{(s,t)\in G\times G : ts^{-1}\in E\},$$ 
and for a function $f : E\to \bb{C}$, let $N(f) : E^*\to \bb{C}$
be given by 
$$N(f)(s,t) = f(ts^{-1}), \ \ \ \ (s,t)\in E^*.$$ 
It is well-known that 
$N$ maps $B(G)$ isometrically onto the set $\frak{S}_{\rm inv}(G)$ of all 
\emph{invariant} Schur multipliers, that is, the Schur multipliers 
$\nph : G\times G\to \bb{C}$ with the property that, for each $r\in G$, 
$\nph(sr,tr) = \nph(s,t)$ for marginally almost every $(s,t)\in G\times G$ (see \cite{bf}), 
and that the image of $B(G)^+$ under $N$ consists of the positive elements of 
$\frak{S}_{\rm inv}(G)$.

Since the sets of the form $E^*$ play an important role in 
positive definiteness, 
we first provide a description of when they are positivity domains (Theorem \ref{th_into}). 
We need an extra technical condition on the group $G$, known as Lebesgue density. 
If $G$ is a locally compact group equipped with a metric, 
we say that $t\in G$ is a point of density for a Borel set $\alpha\subseteq G$ 
if $\lim_{\epsilon\to 0} \frac{m(\alpha\cap B(t,\epsilon))}{m(B(t,\epsilon))} = 1$, 
where $B(t,\epsilon)$ is the open ball with centre $t$ and radius $\epsilon$. 
We say that $G$ satisfies the Lebesgue Density Theorem if there exists a left invariant metric on $G$
which induces its topology, 
such that for every Borel set $\alpha$, 
the subset
$$\alpha_0 = \{s\in \alpha : s \mbox{ a point of density of } \alpha\}$$ 
has full measure in $\alpha$. 
The classical Lebesgue Density Theorem states that the real line $\bb{R}$ satisfies 
this condition; it is also fulfilled for the groups $\bb{T}^n$, $\bb{R}^m$ and their products.

\begin{lemma}\label{l_ldt}
Let $G$ be a locally compact group that satisfies the Lebesgue Density Theorem, and let 
$E\subseteq G$ be a Borel set. Suppose that $\alpha$ and $\beta$ are non-null Borel subsets of $G$ 
such that $\beta\alpha^{-1}\subseteq E$. Let $\alpha_0$ (resp. $\beta_0$) be the 
subset of $\alpha$ (resp. $\beta$) consisting of its points of density. 
Then $\beta_0\alpha_0^{-1}\subseteq {\rm int}(E)$.
\end{lemma}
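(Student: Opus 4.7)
The strategy is a Steinhaus-type density argument. Fix $s\in\alpha_0$ and $t\in\beta_0$, and set $g_0 := ts^{-1}$. Since $\beta\alpha^{-1}\subseteq E$, it suffices to produce an open neighbourhood $V$ of $g_0$ such that $g\alpha\cap\beta\neq\emptyset$ for every $g\in V$; any such $g$ may then be written as $ba^{-1}$ with $a\in\alpha$, $b\in\beta$, whence $g\in E$ and $g_0\in\mathrm{int}(E)$.

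The plan is to produce this neighbourhood by turning the density of $\alpha$ at $s$ and of $\beta$ at $t$ into a measure-overlap estimate inside a small ball around~$t$. Given $\varepsilon>0$, I would first choose $r>0$ small enough that both $m(\alpha\cap B(s,r))$ and $m(\beta\cap B(t,r))$ exceed $(1-\varepsilon)\,m(B(e,r))$; this relies on the fact that the metric is left-invariant (so $B(u,r)=uB(e,r)$) and that $m(B(u,r))=m(B(e,r))$ by left invariance of Haar measure. The same invariances give
\[ m(g\alpha\cap B(t,r))=m(\alpha\cap g^{-1}B(t,r))=m(\alpha\cap B(g^{-1}t,r)) \]
for every $g\in G$. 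Since $g_0^{-1}t=s$, for $g$ close enough to $g_0$ that $d(g^{-1}t,s)<r/2$ one has $B(s,r/2)\subseteq B(g^{-1}t,r)$, and hence $m(g\alpha\cap B(t,r))\geq (1-\varepsilon)\,m(B(e,r/2))$. Both $g\alpha\cap B(t,r)$ and $\beta\cap B(t,r)$ lie inside $B(t,r)$, which has measure $m(B(e,r))$, so a sum-of-measures pigeonhole forces them to intersect whenever
\[ (1-\varepsilon)\bigl(m(B(e,r/2))+m(B(e,r))\bigr) > m(B(e,r)), \]
i.e.\ $(1-\varepsilon)\,m(B(e,r/2))>\varepsilon\, m(B(e,r))$. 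The set $V:=\{g\in G:d(g^{-1}t,s)<r/2\}$ is open as the preimage of an open ball under the continuous map $g\mapsto g^{-1}t$, and contains $g_0$; on $V$ the intersection $g\alpha\cap\beta$ is non-empty, so $V\subseteq\beta\alpha^{-1}\subseteq E$, which gives $g_0\in\mathrm{int}(E)$.

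The main technical hurdle is the last inequality, which requires $m(B(e,r/2))/m(B(e,r))$ to stay bounded below by a positive constant as $r\to 0$. This is a mild local doubling condition on Haar measure; it holds in the groups $\mathbb{R}^n$, $\mathbb{T}^n$ and their products (in which the balls for the standard left-invariant metrics scale polynomially in $r$) and should be viewed as a natural companion to the Lebesgue Density hypothesis imposed on $G$. With any such doubling bound in hand, choosing $\varepsilon$ small and then $r$ small makes the argument go through.
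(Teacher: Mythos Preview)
Your Steinhaus-style argument has the right flavour, but the final step introduces an extra hypothesis---local doubling of Haar measure, i.e.\ a positive lower bound for $m(B(e,r/2))/m(B(e,r))$---that is not part of the statement. You yourself flag this by calling it a ``natural companion'' to the Lebesgue Density Theorem, but the lemma as stated assumes only the latter, and the paper proves it without any doubling. So as written this is a genuine gap, not a presentational one.

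The paper sidesteps the issue by a small but decisive change of strategy. Rather than constructing the open neighbourhood of $g_0=ts^{-1}$ by hand (which is what forces you to compare balls of two different radii), it observes that the function
\[
g\longmapsto m(\beta\cap g\alpha)=(\chi_\beta * \check\chi_\alpha)(g)
\]
is a convolution of $L^2$ functions, hence continuous; the set $U=\{g:m(\beta\cap g\alpha)>0\}$ is therefore automatically open, and it is clearly contained in $\beta\alpha^{-1}\subseteq E$. It then suffices to show that the single point $g_0$ lies in $U$. Translating, $m(\beta\cap g_0\alpha)=m(t^{-1}\beta\cap s^{-1}\alpha)$. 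If this were zero, the sets $s^{-1}\alpha\cap B(e,\epsilon)$ and $t^{-1}\beta\cap B(e,\epsilon)$ would be (essentially) disjoint inside $B(e,\epsilon)$, so their measures would sum to at most $m(B(e,\epsilon))$; dividing by $m(B(e,\epsilon))$ and letting $\epsilon\to 0$ gives $1+1\le 1$ by the density of $\alpha$ at $s$ and of $\beta$ at $t$. This pigeonhole is run at a single radius and a single point, so no ratio of ball-measures ever appears and doubling is unnecessary. The moral: get openness from continuity of convolution, and use density only to place $g_0$ inside the resulting open set.
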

\begin{proof}
For $t\in G$, we have 
\begin{equation}\label{eq_Pcon}
P(\chi_{\alpha}\otimes\chi_{\beta})(t) = \int_G \chi_{\beta}(s)\check{\chi}_{\alpha}(s^{-1}t)ds 
= m(\beta\cap t\alpha).
\end{equation}
Let 
\begin{equation}\label{eq_U}
U := \{t\in G : m(\beta\cap t\alpha ) > 0\}.
\end{equation}
Since $P(\chi_{\alpha}\otimes\chi_{\beta})$ is a continuous function, (\ref{eq_Pcon}) implies that
$U$ is an open set. Note that $t\in \beta\alpha^{-1}$ if and only if $\beta\cap t\alpha\neq \emptyset$.
We show that 
\begin{equation}\label{eq_0}
\beta_0\alpha_0^{-1}\subseteq U.
\end{equation}
To this end, suppose that $t\in \beta_0\alpha_0^{-1}$, and write 
$t = yx^{-1}$, where $x\in \alpha_0$ and $y\in \beta_0$. Suppose that 
$m(\beta\cap t\alpha) = 0$; then $m(y^{-1}\beta\cap x^{-1}\alpha) = 0$. 
Thus, for every $\epsilon > 0$, 
$$m((y^{-1}\beta) \cap B(e,\epsilon)) + m((x^{-1}\alpha) \cap B(e,\epsilon)) \leq m(B(e,\epsilon))$$
and so 
$$2 = \lim_{\epsilon \to 0} \frac{m((y^{-1}\beta) \cap B(e,\epsilon))}{m(B(e,\epsilon))}
+ \lim_{\epsilon \to 0} \frac{m((x^{-1}\alpha) \cap B(e,\epsilon))}{m(B(e,\epsilon))} 
\leq 1,$$
a contradiction.
Thus, (\ref{eq_0}) holds true. 
On the other hand, clearly $U\subseteq E$; since $U$ is open, we have that 
$\beta_0\alpha_0^{-1}\subseteq {\rm int}(E)$.
\end{proof}

\begin{theorem}\label{th_into} 
  If $G$ is a locally compact group satisfying the Lebesgue Density
  Theorem and $E\subseteq G$ is a Borel subset of positive measure, then
\begin{equation}\label{eq_clinte}
  \ointer(E^*) \cong {\rm int}(E)^*\quad\text{and}\quad \ocl(E^*) \cong {\rm cl}(E)^*.
\end{equation}
  In particular, 
  
  (i) \ $E^*$ is a positivity domain if and only if $E$ is a symmetric set, $e\in E$, and 
  $E$ is the closure of its interior;
  
  (ii) $E^*$ is generated by squares if and only if $E^*$ 
  contains a non-null square, if and only if $E$ contains a symmetric open neighbourhood of $e$. 
\end{theorem}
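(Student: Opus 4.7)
The plan is to establish the pair of identities in~(\ref{eq_clinte}) first, and then deduce (i) and (ii) by algebraic manipulation. \textbf{The main equations:} I would prove $\ointer(E^*)\cong {\rm int}(E)^*$ in both directions, and deduce $\ocl(E^*)\cong {\rm cl}(E)^*$ by complementation, using $(E^*)^c = (E^c)^*$ and ${\rm int}(E^c) = {\rm cl}(E)^c$. For $\subseteq_\omega$, Lemma~\ref{l_ldt} does the heavy lifting: given a rectangle $\alpha\times\beta\subseteq_\omega E^*$, after trimming a null piece we have $\beta\alpha^{-1}\subseteq E$, and the lemma yields $\beta_0\alpha_0^{-1}\subseteq{\rm int}(E)$, where $\alpha_0,\beta_0$ are the density-point subsets of $\alpha,\beta$. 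Since $\alpha\setminus\alpha_0$ and $\beta\setminus\beta_0$ are null, $\alpha\times\beta\cong\alpha_0\times\beta_0\subseteq{\rm int}(E)^*$; taking the $\omega$-union gives the inclusion. For $\supseteq$, continuity of $(s,t)\mapsto ts^{-1}$ shows that every $(s_0,t_0)\in{\rm int}(E)^*$ lies in an open rectangle $V\times V'\subseteq E^*$, and second countability of $G$ then covers ${\rm int}(E)^*$ by countably many such rectangles.

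\textbf{Part (i):} the three defining properties of a positivity domain read, via the main equations, as (a) $E^*\cong\ocl(\ointer(E^*))\cong{\rm cl}({\rm int}(E))^*$, (b) $E^*\cong\widehat{E^*}=(E^{-1})^*$, and (c) $\Delta\subseteq_\omega E^*$. The key auxiliary fact is that, for Borel $A,B\subseteq G$, the relation $A^*\cong B^*$ forces $A=B$: if $(A\triangle B)^*$ were marginally null via some null $M$, then any $g\in G$ could be written as $g=ts^{-1}$ with $s,t\notin M$ (because $M\cup g^{-1}M$ is still null and so cannot be all of $G$), forcing $A\triangle B=\emptyset$. Applied to (a)--(b), this gives $E={\rm cl}({\rm int}(E))$ and $E=E^{-1}$, while (c) translates to $e\in E$ since $\Delta$ is not marginally null.

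\textbf{Part (ii):} I would prove the three implications in a cycle. If $E^*$ is generated by squares and $\sqinter(E^*)$ were marginally null, then so would be its $\omega$-closure $E^*$; but $E^*$ has positive product measure by Fubini (since $m(E)>0$). Given a non-null square $\alpha\times\alpha\subseteq_\omega E^*$, Lemma~\ref{l_ldt} with $\beta=\alpha$ yields $\alpha_0\alpha_0^{-1}\subseteq{\rm int}(E)$, and the Steinhaus theorem applied to the positive-measure set $\alpha_0$ gives an open neighbourhood $W$ of $e$ inside $\alpha_0\alpha_0^{-1}$, so that $W\cap W^{-1}$ is a symmetric open neighbourhood of $e$ lying in $E$. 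Finally, if $E$ contains a symmetric open neighbourhood $U$ of $e$, pick a symmetric open $V\ni e$ with $VV\subseteq U$, use Lindel\"ofness (from second countability) to write $G=\bigcup_i Vg_i$, and set $\alpha_i=Vg_i\setminus\bigcup_{j<i}Vg_j$; then $\alpha_i\alpha_i^{-1}\subseteq VV\subseteq E$, so $\alpha_i\times\alpha_i\subseteq E^*$, realising condition~(ii) of Theorem~\ref{th_charomi} and hence its condition~(v).

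\textbf{Main obstacle:} the crux is Lemma~\ref{l_ldt}, which is the bridge between the measurable-rectangle world (inside $E^*$) and the topological-interior world (inside $E$). The technical care lies in the careful bookkeeping of marginal-equivalence classes and in verifying the injectivity of $F\mapsto F^*$ on marginal-equivalence classes of Borel subsets of $G$, which powers part~(i).
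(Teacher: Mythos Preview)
Your proof is correct and follows essentially the same approach as the paper: Lemma~\ref{l_ldt} for the nontrivial inclusion in~(\ref{eq_clinte}), complementation for the closure identity, and the Steinhaus-type phenomenon together with Theorem~\ref{th_charomi} for part~(ii). The only differences are cosmetic: you spell out the injectivity $A^*\cong B^*\Rightarrow A=B$ that the paper leaves implicit in~(i), you invoke Steinhaus explicitly where the paper uses the open set~$U$ from the proof of Lemma~\ref{l_ldt} (which is symmetric by left invariance, so no intersection $W\cap W^{-1}$ is needed), and for the final implication in~(ii) you produce a disjoint partition via Lindel\"of to reach condition~(ii) of Theorem~\ref{th_charomi}, whereas the paper uses translates of a metric ball and a countable dense set to reach condition~(iv).
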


\begin{proof}
  If
  $U\subseteq E$ is open then $U^*$ is an open, and hence an $\omega$-open,
  subset of $E^*$, and so ${\rm int}(E)^*\subseteq_{\omega} \ointer(E^*)$.
  Conversely, suppose that $\alpha\times\beta$ is a rectangle,
  marginally contained in $E^*$. After deleting sets of measure zero
  from $\alpha$ and $\beta$, we may assume that
  $\alpha\times\beta\subseteq E^*$. Let $\alpha_0$ (resp. $\beta_0$)
  be the set of all points of density in $\alpha$ (resp. $\beta$).  By assumption,
  $\alpha\setminus\alpha_0$ and $\beta\setminus\beta_0$ have measure
  zero.  By Lemma~\ref{l_ldt}, $(\beta_0\alpha_0^{-1})^*\subseteq
  {\rm int}(E)^*$, and since $\alpha_0\times\beta_0\subseteq
  (\beta_0\alpha_0^{-1})^*$, we have that
  $\alpha\times\beta\subseteq_{\omega} {\rm int}(E)^*$.  It follows
  that $\ointer(E^*) \cong {\rm int}(E)^*$. This easily implies that $\ocl(E^*)  \cong {\rm cl}(E)^*$. 
  
Statement (i) is immediate from identities (\ref{eq_clinte}) and the definition of a positivity domain.
To prove (ii), assume first that $E^*$ is generated by squares. Since $E^*$ is non-null, 
$E^*$ contains a non-null square, say $\alpha\times\alpha$. 
Let $U$ be the open set defined in (\ref{eq_U}), where we have taken $\beta = \alpha$. 
By the left invariance of the Haar measure, $U$ is symmetric, and it clearly 
contains the neutral element of $G$. 
Conversely, suppose that $U\subseteq E$ is symmetric, open and contains $e$. 
Since $G$ is second countable, it admits a left invariant metric \cite{k}, say $\rho$. 
Let $\delta > 0$ be such that $B(e,\delta)\subseteq U$. Letting $V = B(e,\delta/2)$, 
we have that $V\times V\subseteq U^*$ (indeed, if $s,t\in V$ then 
$$\rho(e,ts^{-1}) \leq \rho(e,t) + \rho(t,ts^{-1}) = \rho(e,t) + \rho(e,s^{-1}) < \delta).$$
It follows that $(Vr)\times (Vr)\subseteq U^*$ for all $r\in G$ and, letting $R\subseteq G$ be a 
countable dense set, we have that $\Delta\subseteq \cup_{r\in R} (Vr)\times (Vr)$. 
Thus, $\Delta\subseteq_{\omega} \sqinter(E^*)$, and
Theorem \ref{th_charomi} implies that $E^*$ is generated by squares.
\end{proof}

We are now in a position to show that the operator systems of the form 
$\cl S(\kappa)$, where $\kappa$ is a positivity domain, 
do not always contain positive rank one operators (see the paragraph after Definition \ref{d_pd}).

\begin{corollary}\label{c_exi}
There exists a positivity domain $\kappa$ such that 
$\cl S(\kappa)$ does not contain a positive rank one operator.
\end{corollary}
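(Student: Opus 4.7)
The plan is to exhibit such a positivity domain by constructing a suitable symmetric subset $E$ of a locally compact abelian group satisfying the Lebesgue Density Theorem, and then setting $\kappa = E^*$. The reduction to a concrete construction is immediate from the preceding theorems: by Proposition~\ref{th_ro}(i) (as noted just after Definition~\ref{d_pd}), $\cl S(\kappa)$ fails to contain a positive rank one operator precisely when $\sqinter(\kappa)\cong\emptyset$, that is, when $\kappa$ contains no non-null square. So I need a positivity domain $\kappa$ with no non-null square inside it.

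I take $G=\bb R$ with Lebesgue measure, which satisfies the Lebesgue Density Theorem. I would like a symmetric Borel subset $E\subseteq \bb R$ with $0\in E$, $E=\overline{\mathrm{int}(E)}$ (so that by Theorem~\ref{th_into}(i), $E^*$ is a positivity domain), and such that $E$ contains no symmetric open neighbourhood of $0$ (so that by Theorem~\ref{th_into}(ii), $E^*$ contains no non-null square). A concrete candidate is
\[E=\{0\}\cup\bigcup_{n=1}^{\infty}\Bigl[\tfrac{1}{2n+1},\tfrac{1}{2n}\Bigr]\cup\bigcup_{n=1}^{\infty}\Bigl[-\tfrac{1}{2n},-\tfrac{1}{2n+1}\Bigr].\]
This is a symmetric Borel set containing $0$. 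Its interior is $\bigcup_n(\tfrac{1}{2n+1},\tfrac{1}{2n})\cup\bigcup_n(-\tfrac{1}{2n},-\tfrac{1}{2n+1})$, and since $\tfrac{1}{2n}\to 0$, the point $0$ lies in the closure of this interior, giving $E=\overline{\mathrm{int}(E)}$. On the other hand, any open neighbourhood of $0$ contains intervals of the form $(-\tfrac{1}{2n},\tfrac{1}{2n+1}^{c})$ which meet the complement of $E$, so no open neighbourhood of $0$ is contained in $E$.

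Setting $\kappa=E^*$, Theorem~\ref{th_into}(i) gives that $\kappa$ is a positivity domain, while Theorem~\ref{th_into}(ii) shows that $\kappa$ contains no non-null square, hence $\sqinter(\kappa)\cong\emptyset$. By the observation recalled in the first paragraph, $\cl S(\kappa)$ contains no positive rank one operator. There is no real obstacle: the only delicate point is verifying that the chosen $E$ simultaneously equals the closure of its interior and fails to contain a neighbourhood of~$0$, both of which are visible from the explicit formula above.
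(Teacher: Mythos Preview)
Your argument is correct and follows essentially the same route as the paper's own proof: the paper also constructs $E$ as $\{0\}$ together with a symmetric union of closed intervals accumulating at $0$ (working on $\bb T$ with a general decreasing sequence $(t_n)$ rather than on $\bb R$ with $t_n=\tfrac{1}{n+1}$), and then invokes Theorem~\ref{th_into} and Proposition~\ref{th_ro}(i) exactly as you do. One cosmetic point: the expression ``$(-\tfrac{1}{2n},\tfrac{1}{2n+1}^{c})$'' is garbled---you mean that the gaps $(\tfrac{1}{2n+2},\tfrac{1}{2n+1})\subseteq E^c$ accumulate at $0$.
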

\begin{proof}
  Realise the group of the circle $\bb{T}$ as the interval $(-1,1]$,
  equipped with addition modulo $2$ and (normalised) Lebesgue measure;
  set $H = L^2(-1,$ $1)$.  Let $(t_n)_{n=1}^{\infty}\subseteq (0,1)$
  be a strictly decreasing sequence converging to zero and set
  \[E = \{0\}\cup
  \bigcup_{n=1}^{\infty} ([t_{2n},t_{2n-1}] \cup [-t_{2n-1},-t_{2n}]).\]
It is clear that $E$ does not contain a (symmetric) neighbourhood of $0$. 
By Theorem \ref{th_into}, $E^*$ is a positivity domain and it does not contain a non-null square.
By Proposition \ref{th_ro} (i), $\cl S(E^*)$ does not contain a positive rank one operator. 
\end{proof}

We will need the following result by Lau \cite{lau}. It will be used in the proof of 
Proposition \ref{p_proj}, and is the reason we require the assumption that $G$ be amenable 
in most of the subsequent results. 

\begin{lemma}\label{l_lau}
Let $G$ be a locally compact amenable group. Then there exists a net $(\xi_i)\subseteq L^2(G)$
of (positive) functions of norm one, such that the net $(P(\xi_i\otimes\overline{\xi_i}))$
is an approximate identity of $A(G)$.
\end{lemma}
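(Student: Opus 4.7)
The plan is to construct the net explicitly using the Følner characterisation of amenability, and then verify the approximate-identity property via a standard density argument in $A(G)$. Since amenability of $G$ is equivalent to the existence of a F\o lner net, I would fix a net $(U_i)$ of relatively compact, open, symmetric neighbourhoods of $e$ with positive measure, enjoying the property that for every compact $K\subseteq G$ and every $\varepsilon>0$, eventually $m(U_i\cap tU_i)>(1-\varepsilon)\,m(U_i)$ for all $t\in K$.

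Next, set $\xi_i = m(U_i)^{-1/2}\chi_{U_i}$, which is a non-negative function in $L^2(G)$ with $\|\xi_i\|_2=1$. A direct computation using the definition of $P$ gives
\[
e_i(t) := P(\xi_i\otimes \overline{\xi_i})(t) = (\overline{\xi_i}\ast \check{\xi_i})(t) = \frac{m(U_i\cap tU_i)}{m(U_i)},
\]
so $e_i$ is a continuous positive definite function with $e_i(e)=1$ and $\|e_i\|_{A(G)}\leq \|\xi_i\|_2\|\overline{\xi_i}\|_2=1$. The F\o lner property then ensures $e_i\to 1$ uniformly on every compact subset of $G$.

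To upgrade uniform-on-compacta convergence to convergence in the norm of $A(G)$, I would first recall that $A(G)\cap C_c(G)$ is norm-dense in $A(G)$, and that $A(G)$ is a regular Banach algebra. For $g\in A(G)\cap C_c(G)$ with compact support $K$, one may choose a neighbourhood $V$ of $K$ with compact closure and, by regularity of $A(G)$, a function $h\in A(G)$ with $h\equiv 1$ on $K$ and supported in $V$. Since $e_i\to 1$ uniformly on $\overline{V}$, a version of Herz's ``restriction-to-compacta'' argument (exploiting that multiplication by the fixed compactly supported function $h$ is continuous on $A(G)$, and that $e_i h$ converges to $h$ in $A(G)$-norm because $e_i h$ is a bounded net in $A(G)$ converging pointwise to $h$ on a compactly generated support) yields $\|e_ig - g\|_{A(G)}\to 0$. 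Combining with the uniform bound $\|e_i\|_{A(G)}\leq 1$ and density extends this to all $f\in A(G)$.

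The main obstacle is precisely this last step: passing from pointwise/uniform-on-compacta convergence to $A(G)$-norm convergence. Handling it cleanly requires either Leptin's theorem and the regularity of $A(G)$, or a direct Herz-type argument using that compactly supported elements are norm-dense and that the Fourier algebra behaves well under localisation. Once this is in place, the existence of a bounded approximate identity of the prescribed form $(P(\xi_i\otimes\overline{\xi_i}))$ with $\xi_i\geq 0$ and $\|\xi_i\|_2=1$ follows at once.
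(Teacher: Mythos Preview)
The paper does not prove this lemma; it is stated without proof and attributed to Lau \cite{lau}. Your construction of the net via F\o lner sets is the standard one, and the computation $e_i(t)=m(U_i\cap tU_i)/m(U_i)$ is correct (incidentally, F\o lner sets need not be symmetric open neighbourhoods of $e$; any compact sets of positive measure will do, and the formula is unchanged).

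There is, however, a genuine gap in the final step. You assert that $e_i h\to h$ in the $A(G)$-norm ``because $e_i h$ is a bounded net in $A(G)$ converging pointwise to $h$ on a compactly generated support''. This implication is false: boundedness together with uniform-on-compacta convergence yields only weak* convergence in $B(G)=C^*(G)^*$, not norm convergence in $A(G)$. Worse, the argument is circular: you reduce $\|e_ig-g\|_{A(G)}\to 0$ for a compactly supported $g$ to $\|e_ih-h\|_{A(G)}\to 0$ for another compactly supported $h$, which is the same assertion you are trying to prove. The actual argument (essentially Leptin's theorem) requires a direct estimate: for $f,g\in C_c(G)$ and $u(t)=\langle\lambda(t)f,g\rangle$, one expresses $e_iu$ explicitly as a coefficient of $\lambda$ (via Fell absorption or an equivalent integral identity) and then controls $\|e_iu-u\|_{A(G)}$ by quantities of the form $\sup_{s\in K}\|\lambda(s)\xi_i-\xi_i\|_2$ with $K$ compact, which tend to zero by the F\o lner condition. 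That computation is the missing ingredient.
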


If $E\subseteq G$ is a set that is the closure of its interior, we set
$$B(E) = \{u|_E : u\in B(G)\}.$$
If $E$ is in addition symmetric and contains the neutral element of $G$, 
we set 
$$B(E)^+ = \{u|_E : u\in B(G)^+\}.$$
We next prove an invariant version of  Proposition \ref{p_chsch} and 
Theorem \ref{th_ns}.

\begin{proposition}\label{p_proj} 
  Let $G$ be a locally compact amenable group satisfying the 
  Lebesgue Density Theorem, $E\subseteq G$ 
  be a set that is the closure of its interior and $u : E\to \bb{C}$ be a measurable function. 
The following hold:

(i) \ $N(u)$ is a Schur multiplier if and only if $u$ is equivalent to a function that lies in $B(E)$;

(ii) If $E$ is moreover symmetric and $e\in E$, then 
$N(u)$ is a positive Schur multiplier if and only if $u$ is equivalent to a function 
that lies in $B(E)^+$.
\end{proposition}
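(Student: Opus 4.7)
\emph{Easy directions.} If $u = v|_E$ for some $v \in B(G)$ (respectively $v \in B(G)^+$), then by the Bo\.zejko--Fendler correspondence $N\colon B(G)\to\frak S_{\mathrm{inv}}(G)$ recalled before the proposition, $N(v)$ is an invariant Schur multiplier on $G\times G$ (respectively a positive one), and a direct computation gives $N(v)|_{E^*}=N(u)$. Proposition~\ref{p_chsch} then yields that $N(u)$ is a Schur multiplier on $E^*$; moreover, $S_{N(u)}$ equals the restriction of $S_{N(v)}$ to $\cl S(E^*)$, inheriting positivity when $v\in B(G)^+$.

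\emph{Hard direction of (i).} Assume $N(u)$ is a Schur multiplier on $E^*$. Proposition~\ref{p_chsch} supplies an extension $\psi\in\frak S(G)$ with $\psi|_{E^*}\sim N(u)$. By Lemma~\ref{l_lau}, pick a net $(\xi_i)\subseteq L^2(G)^+$ of unit vectors so that $u_i := P(\xi_i\otimes\overline{\xi_i})\in A(G)^+$ is an approximate identity of $A(G)$; standard amenability arguments then give $u_i\to 1$ uniformly on compact subsets of $G$. Set $h_i=\xi_i\otimes\overline{\xi_i}\in\cl T(G)$ and define
\[
v_i := P(\psi\cdot h_i)\in A(G)\subseteq B(G).
\]
Contractivity of $P\colon\cl T(G)\to A(G)$ and boundedness of the Schur action on $\cl T(G)$ give the uniform bound $\|v_i\|_{B(G)}\le \|\psi\|_{\frak S(G)}$. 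Using the identity $P(h)(r)=\int_G h(r^{-1}s,s)\,dm(s)$, valid on elementary tensors and extended by bilinearity and absolute summability, one computes that for $r\in E$ the curve $s\mapsto(r^{-1}s,s)$ lies entirely in $E^*$ (since $s(r^{-1}s)^{-1}=r\in E$), so a Fubini argument using $\psi\sim N(u)$ on $E^*$ gives
\[
v_i(r)=u(r)\int_G \xi_i(r^{-1}s)\overline{\xi_i(s)}\,dm(s)=u(r)\,u_i(r)\quad\text{for a.e.\ }r\in E.
\]
Extract a weak*-convergent subnet $v_i\to v$ in $B(G)=C^*(G)^*$ by Banach--Alaoglu. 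Since $u_i\to 1$ uniformly on compacts, $v_i(r)\to u(r)$ pointwise a.e.\ on $E$. Testing the weak*-convergence against $f\in L^1(G)\subseteq C^*(G)$ and applying dominated convergence (with uniform bound $|v_i|\le \|v_i\|_\infty\le \|v_i\|_{B(G)}$) yields $v|_E\sim u$; hence $u\in B(E)$.

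\emph{Hard direction of (ii).} If $N(u)$ is a positive Schur multiplier on $E^*$, then $S_{N(u)}$ is a positive map and, in particular, $N(u)$ is partially positive, so Theorem~\ref{th_ns} produces a fully-defined positive extension $\psi\in \frak S(G)^+$. Positivity of $S_\psi$ is equivalent to its predual $h\mapsto \psi\cdot h$ preserving $\cl T(X)^+$, so $\psi\cdot h_i\in \cl T(G)^+$ and $v_i=P(\psi\cdot h_i)\in A(G)^+$. Since $B(G)^+$ is weak*-closed, the weak*-limit $v$ lies in $B(G)^+$, and $u\sim v|_E\in B(E)^+$ as required.

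\emph{Main obstacle.} The technically most delicate step is the pointwise formula $v_i(r)=u(r)u_i(r)$ for a.e.\ $r\in E$: it requires the Fubini manipulation to control the product-null set on which $\psi$ may differ from $N(u)$ along the slices $s\mapsto(r^{-1}s,s)$, together with any modular-function bookkeeping needed when $G$ is not unimodular. Once this formula is justified, the pointwise limit $u(r)$ on $E$ combined with weak*-convergence of $(v_i)$ in $B(G)$ routinely yields $v|_E\sim u$, and the positivity statement in (ii) follows automatically from the weak*-closedness of $B(G)^+$.
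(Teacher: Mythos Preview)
Your proof is correct and follows essentially the same route as the paper's: extend $N(u)$ to a global Schur multiplier via Proposition~\ref{p_chsch} (or Theorem~\ref{th_ns} in the positive case), multiply by the positive elements $\xi_i\otimes\overline{\xi_i}$ coming from Lau's approximate identity, push forward through $P$ to get a bounded net in $A(G)$, and take a weak* cluster point in $B(G)$. Your identification of the ``main obstacle'' is exactly right; the paper handles that step by invoking \cite[Lemma~2.3]{stt} rather than spelling out the Fubini argument. One small point of care: the paper tests the weak* limit only against $f\in C_c(G)$ supported in an open $U\subseteq E$, so that uniform convergence of $u_i\to 1$ on $\supp f$ (rather than DCT for nets, which is delicate) gives $\int v_i f\to \int uf$, and then varies $U$ over the interior of $E$; your appeal to general $f\in L^1(G)$ and dominated convergence is slightly loose for a net, but the fix is immediate.
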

\begin{proof}
(i)  
Suppose that $u$ is equivalent to a function 
$u'$ in $B(E)$. We have that $u'$ is the restriction to $E$ of a function $v\in B(G)$; 
thus, $N(u')$ is the restriction to $E^*$ of the Schur multiplier $N(v)$ on $G\times G$.
By (\ref{eq_clinte}) in Theorem \ref{th_into}, $E^*$ is generated by rectangles, and by 
Proposition \ref{p_chsch}, $N(u')$ is a Schur multiplier. Since $N(u)\sim N(u')$, we 
conclude that $N(u)$ is a Schur multiplier. 

Conversely, suppose that $N(u)$ is a Schur multiplier on $E^*$. By Proposition \ref{p_chsch}, 
there is a Schur multiplier $\nph : G\times G\to \bb{C}$ whose restriction to $E^*$ is equivalent to $N(u)$. 
Let $(\xi_i)\subseteq L^2(G)$ be the net from Lemma~\ref{l_lau}, and 
$\psi_i = \xi_i\otimes\overline{\xi_i}$; thus, $\psi_i\in \cl T(G)^+$. 
We have that $\nph\psi_i\in \cl T(G)$ for all $i$. Set $v_i = P(\nph\psi_i)$; thus,
$(v_i)_i\subseteq A(G)$. Note that 
$$\|v_i\|_{A(G)} = \|P(\nph\psi_i)\|_{A(G)} \leq \|\nph\psi_i\|_{\cl T(G)} 
\leq \|\nph\|_{\frak{S}(G)}\|\psi_i\|_{\cl T(G)} = \|\nph\|_{\frak{S}(G)},$$
for every $i$. 
Without loss of generality, we may
assume that the net $(v_i)$ converges to a function $v\in B(G)$ 
in the weak* topology of $B(G)$ (here we view $B(G)$ as the dual of 
the C*-algebra $C^*(G)$ of $G$, see \cite{eymard}).

By~\cite[Lemma 2.3]{stt}, for almost every $t\in E$, we have 
$$v_i(t) = \int_G \nph(t^{-1}s,s)\psi_i(t^{-1}s,s) ds = u(t) \int_G \psi_i(t^{-1}s,s) ds \to u(t),$$
since $(P(\xi_i\otimes\overline{\xi_i}))$ converges to the constant function $1$ uniformly on compact sets.

Let $U\subseteq E$ be open. 
Denoting by $\lambda$ the left regular representation of $L^1(G)$ on $L^2(G)$, 
for every $f\in C_c(G)$ with support contained in $U$, 
we have 
$$\int_G v_i f dm = \langle \lambda(f),v_i\rangle \longrightarrow 
\langle \lambda(f),v\rangle = \int_G v f dm.$$
On the other hand, by the Lebesgue Dominated Convergence Theorem, 
$$\int_G v_i f dm \longrightarrow \int_G u f dm $$
and hence $\int_G v f dm = \int_G u f dm$. Since this holds for every $f\in C_c(G)$ supported on $U$, 
we conclude that $v = u$ almost everywhere on $U$, and since $U$ was an arbitrary open subset of $E$, 
we have that $v = u$ almost everywhere on $E$. It follows that 
$u$ is equivalent to a function from $B(E)$. 

(ii) By Theorem \ref{th_into} (i), $E^*$ is a positivity domain.
If $u\in B(E)^+$ then $u$ is the restriction to $E$ of a function $v\in B(G)^+$.
Thus, $N(v)$ is a positive Schur multiplier on $G\times G$
and $N(u)$ is its restriction to $E^*$. It follows that
$N(u)$ is a positive Schur multiplier.

Conversely, suppose that $N(u)$ is a positive Schur multiplier. 
By Theorem \ref{th_ns}, there exists 
$\nph\in \frak{S}(G)^+$ whose restriction to $E^*$ is equivalent to $N(u)$. 
But then (letting as above $v_i = P(\nph\psi_i)$), we have that $v_i\in A(G)^+$ for all $i$ 
and hence the weak* cluster point $v$ of the net $(v_i)$ is a positive definite function.
As in (i), we conclude that $u$ is equivalent to the restriction of $v$ to $E$.
\end{proof}

The next result gives a sufficient condition for the existence of 
positive definite extensions in operator-theoretic terms and is one of the 
main results of this section.

\begin{theorem}\label{th_lcex}
Let $G$ be a locally compact 
amenable group satisfying the Lebesgue Density Theorem. 
Let $E\subseteq G$ be a symmetric set which is the closure of its interior and 
contains the neutral element of $G$. 
Suppose that every positive operator in $\cl S(E^*)$
is a weak* limit of sums of rank one positive operators in $\cl S(E^*)$. 
If $u \in B(G)$ is positive definite on $E$ 
then there exists a positive definite function $v\in B(G)$ such that $v|_E = u|_E$. 
\end{theorem}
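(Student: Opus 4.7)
The plan is to prove that $\nph := N(u)|_{E^*}$ is a positive Schur multiplier on the positivity domain $E^*$, so that Proposition~\ref{p_proj}(ii) immediately yields the desired $v\in B(G)^+$ with $v|_E = u|_E$. By Theorem~\ref{th_into}(i), $E^*$ is a positivity domain. Since $u\in B(G)$, the function $N(u)$ is a Schur multiplier on $G\times G$, and hence by Proposition~\ref{p_chsch} the restriction $\nph$ is a Schur multiplier on $E^*$.

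The first substantive step is to verify that $\nph$ is partially positive in the sense of Definition~\ref{d_fp}. Fix a measurable $\alpha\subseteq G$ with $\alpha\times\alpha\subseteq_\omega E^*$, so that $\alpha\alpha^{-1}\subseteq E$ up to a null set. By Proposition~\ref{p_ppsch}, partial positivity reduces to checking that $S_\nph(\xi\otimes\xi^*)\geq 0$ whenever $(\supp\xi)\times(\supp\xi)\subseteq_\omega E^*$, which a direct computation shows is equivalent to the inequality
\[\int_G\int_G u(xy^{-1})F(x)\overline{F(y)}\,dx\,dy\geq 0\]
for $F\in L^2\cap L^1$ supported in such an $\alpha$. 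By inner regularity of the Haar measure, one may reduce to $F$ supported on a compact set $K\subseteq\alpha$. Approximating $F$ by simple functions $\sum c_j\chi_{A_j}$ associated with fine partitions of $K$ and representatives $x_j\in A_j$, the integral is approximated by the Riemann sum $\sum_{i,j}c_i\overline{c_j}u(x_ix_j^{-1})m(A_i)m(A_j)$; since $x_ix_j^{-1}\in\alpha\alpha^{-1}\subseteq E$, the matrix positive definiteness of $u$ on $E$ makes these sums nonnegative, and continuity of $u\in B(G)$ passes the inequality to the limit.

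With partial positivity in hand, the hypothesis on $\cl S(E^*)$ upgrades it to full positivity of $S_\nph$: any $T\in\cl S(E^*)^+$ is, by assumption, the weak* limit of finite sums of operators in $\cl S_1^+(E^*)$; Proposition~\ref{p_ppsch} sends each such sum to a positive operator, and weak* continuity of $S_\nph$ (Proposition~\ref{p_chsch}) together with weak*-closedness of $\cl B(H)^+$ forces $S_\nph(T)\geq 0$. Hence $\nph$ is a positive Schur multiplier on $E^*$, and Proposition~\ref{p_proj}(ii) yields $v\in B(G)^+$ with $v|_E = u|_E$ almost everywhere; continuity of $u,v$ combined with the fact that $E$ is the closure of its interior promotes this to pointwise equality on $E$. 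The main technical obstacle is the partial-positivity step, where the passage from the discrete matrix form of positive definiteness of $u$ on $E$ to its integrated counterpart requires care, relying on continuity of $u$ and Riemann-sum approximation over compact subsets of $\alpha$.
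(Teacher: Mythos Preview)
Your proof is correct and follows essentially the same route as the paper's: establish that $E^*$ is a positivity domain (Theorem~\ref{th_into}), show that $\nph=N(u)|_{E^*}$ is a partially positive Schur multiplier, use the hypothesis on $\cl S(E^*)$ to upgrade to positivity of $S_\nph$, and then invoke Proposition~\ref{p_proj}(ii). The only substantive difference lies in the partial-positivity step: the paper treats a compact $\alpha$ with $\alpha\times\alpha\subseteq E^*$ by observing that $N(u)|_{\alpha\times\alpha}$ is a continuous positive definite kernel and citing \cite[Theorem~4.8]{naomi_tt} to deduce that it is a positive Schur multiplier, whereas you supply the underlying argument directly via the Riemann-sum approximation (which is exactly what that citation encodes). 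You also unpack the easy direction of Theorem~\ref{th_posr} inline rather than invoking it by name, and you handle the passage from almost-everywhere to pointwise equality on $E$ explicitly, a point the paper leaves implicit.
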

\begin{proof} 
Let $u_0 : E\to \bb{C}$ be the restriction of $u$ and $\nph = N(u_0)$. 
By Proposition \ref{p_proj}, 
$\nph$ is a Schur multiplier.
We claim that $\nph$ is partially positive. 
Indeed, suppose first that $\alpha\subseteq G$ is a compact subset such that 
$\alpha\times\alpha\subseteq E^*$. Since $\nph|_{\alpha\times\alpha} = N(u_0)|_{\alpha\times\alpha}$,
we have that $\nph|_{\alpha\times\alpha}$ 
is a continuous positive definite function. It follows that $\nph|_{\alpha\times\alpha}$
is a positive Schur multiplier 
(see the discussion preceding~\cite[Theorem 4.8]{naomi_tt} and the proof of that theorem).
If $\alpha\subseteq G$ is a measurable set of finite measure with 
$\alpha\times\alpha\subseteq E^*$ then, by the regularity of the Haar measure, 
there exists an increasing sequence $(\alpha_k)_{k\in \bb{N}}$ of compact subsets 
of $\alpha$ such that $\cup_{k\in \bb{N}}\alpha_k$ has full measure in $\alpha$. 
We have that $S_{\nph}(T) = \sotlim_{k\to\infty} S_{\nph}(P(\alpha_k)TP(\alpha_k))$ for 
every $T\in \cl B(P(\alpha)H)$; it thus follows that $\nph|_{\alpha\times\alpha}$
is a positive Schur multiplier. Finally, if $\alpha\subseteq G$ is an arbitrary measurable 
set such that $\alpha\times\alpha \subseteq E^*$ then by the $\sigma$-finiteness of 
the Haar measure, $\alpha$ is the union of an increasing sequence of sets of finite measure 
and it follows as before that $\nph|_{\alpha\times\alpha}$ is a positive Schur multiplier. 

We have thus shown that $\nph$ is a partially positive Schur multiplier. 
By Theorem~\ref{th_into}, $E^*$ is a positivity domain and, 
by Theorem~\ref{th_posr}, there exists a positive Schur multiplier $\psi : G\times G\to \bb{C}$
extending $\nph$. 
Now Proposition~\ref{p_proj} implies the existence of a positive definite function $v\in B(G)$ such that 
$v|_{E} = u_0$. 
The proof is complete.
\end{proof}

We note that the function $u$ in the statement of Theorem \ref{th_lcex} does not need to be defined 
on the whole of $G$ but just on the subset $E$, and it suffices that it possess an extension to a function from $B(G)$.

\medskip

We now turn our attention to discrete groups; the assumption on the second countability 
will be dropped for the rest of the section.

\begin{definition}
  Let $G$ be a discrete group.  A subset $E\subseteq G$ will
  be called a \emph{chordal subset of $G$} if $E$ is a symmetric set containing the neutral element $e$ with the following property: 
  if $n\geq 4$ and $s_1,\dots,s_n\in E$ with $s_n\cdots s_2 s_1 = e$, then there
  exist $i, k\in \{1, \ldots, n\}$ with $2\leq k-i \leq n-2$ so that $s_{k-1}s_{k-2}\cdots s_{i} \in E$.
\end{definition}

\begin{lemma}\label{l_grch}
Let $G$ be a discrete group and $E\subseteq G$ be a symmetric subset containing 
the neutral element. The following are equivalent:
\begin{enumerate}[(i)]
\item $E$ is a chordal subset of $G$;

\item the set $E^*$ is chordal.
\end{enumerate}
\end{lemma}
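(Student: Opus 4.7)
The plan is to exhibit a bijection between cycles in the graph $E^*$ and tuples $(s_1,\ldots,s_n)\in E^n$ with $s_n\cdots s_1=e$, under which chords correspond to chords, so that the two chordality conditions translate directly into one another.

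More concretely, I would start with a cycle $(x_1,\ldots,x_n)$ in $E^*$ of length $n\geq 4$ and define $s_i = x_{i+1}x_i^{-1}$ for $i=1,\ldots,n-1$ and $s_n = x_1 x_n^{-1}$. Because $(x_i,x_{i+1})\in E^*$ (with indices mod $n$) exactly says $x_{i+1}x_i^{-1}\in E$, each $s_i$ lies in $E$. The product telescopes:
\[
s_n s_{n-1}\cdots s_1 = (x_1 x_n^{-1})(x_n x_{n-1}^{-1})\cdots (x_2 x_1^{-1}) = e.
\]
Conversely, given $(s_1,\ldots,s_n)\in E^n$ with $s_n\cdots s_1 = e$, I would set $x_1 = e$ and $x_{i+1} = s_i x_i$ inductively, so $(x_i,x_{i+1})\in E^*$ for $i<n$, and $x_1 x_n^{-1} = (s_{n-1}\cdots s_1)^{-1} = s_n \in E$ (using the product relation), which closes the cycle.

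Next I would check the chord compatibility. For $1\leq i<k\leq n$, the pair $(x_i,x_k)$ lies in $E^*$ if and only if $x_k x_i^{-1}\in E$; since
\[
x_k x_i^{-1} = (x_k x_{k-1}^{-1})(x_{k-1}x_{k-2}^{-1})\cdots (x_{i+1}x_i^{-1}) = s_{k-1}s_{k-2}\cdots s_i,
\]
this is precisely the condition $s_{k-1}\cdots s_i\in E$. The range $2\leq k-i\leq n-2$ matches on both sides, so chords of the cycle correspond exactly to chords in the sense of the chordal-subset definition.

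With the bijection and chord correspondence set up, both implications are immediate: if $E$ is a chordal subset and $(x_1,\ldots,x_n)$ is a cycle in $E^*$ of length $\geq 4$, then the associated $s_i$'s witness a chord of $E$, which back-translates to a chord of the cycle; conversely, any $s_i$-tuple with product $e$ gives a cycle in $E^*$, and a chord of the cycle back-translates to the chord required by the definition of a chordal subset. There is no real obstacle here; the one point requiring care is the non-abelian telescoping (multiplication order and the cyclic closure $s_n = x_1 x_n^{-1}$), which is what forces the specific bracketing $s_n\cdots s_1=e$ in the definition of chordal subset.
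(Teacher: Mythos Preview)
Your argument is correct and follows essentially the same route as the paper: you pass from a cycle $(x_1,\dots,x_n)$ in $E^*$ to the tuple $s_i=x_{i+1}x_i^{-1}$ (with $s_n=x_1x_n^{-1}$) and back via $x_1=e$, $x_{i+1}=s_ix_i$, and then use the telescoping identity $x_kx_i^{-1}=s_{k-1}\cdots s_i$ to match chords on both sides. The only minor imprecision is calling this a ``bijection'' (different cycles related by a common right translation yield the same tuple), but this does not affect the proof.
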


\begin{proof}
(i)$\Rightarrow$(ii) 
Suppose that $n\geq 4$ and consider a cycle
$$(x_1,x_2), \cdots, (x_{n-1},x_n), (x_n,x_1)\in E^*.$$
Setting 
$$s_1 = x_2 x_1^{-1}, \cdots, s_{n-1} = x_n x_{n-1}^{-1}  \mbox{ and } s_n = x_1 x_n^{-1},$$
we see that  $s_1, \ldots, s_n\in E$ and $s_n\cdots s_2 s_1 = e$. By the chordality of $E$, 
there exists $i, k \in \{1, \ldots, n\}$ with $2\leq k-i \leq n-2$ so that $x_k x_i^{-1}= s_{k-1} \cdots s_{i+1}s_i \in E$,
so $(x_i, x_k)\in E^*$ is a chord in the given cycle. 

(ii)$\Rightarrow$(i) Suppose that $n\geq 4$ and $s_1,\dots,s_n\in E$ with $s_n\cdots s_2 s_1 = e$.
The edges
$$(e,s_1), (s_1, s_2 s_1), (s_2 s_1, s_3 s_2 s_1), \cdots, (s_{n-1}\cdots s_1, e)$$
form a cycle of $E^*$, which we label as $(x_1, x_2), \cdots (x_{n-1}, x_n), (x_n, x_1)$. 
Since $E^*$ is chordal, there exist $1\leq i, k\leq n$ with $2\leq k-i \leq n-2$ so that 
$(x_i, x_k)\in E^*$, so $x_k x_i^{-1}= s_{k-1}\cdots s_{i+1}s_{i}\in E$. Hence $E$ is a chordal subset of $G$.
\end{proof}

As an application of our results on positive extensions of Schur multipliers, 
we now provide a different approach to the main result of \cite{bt}.

\begin{theorem}\label{th_chs}
If $E$ is a chordal subset of a discrete amenable group $G$, then
every positive definite function $u : E\to\bb{C}$ has a positive definite extension 
to a function $v : G\to \bb{C}$. 
\end{theorem}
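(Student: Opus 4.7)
The plan is to produce a positive scalar Schur multiplier on all of $G\times G$ whose restriction to $E^*$ coincides with $N(u)$, and then to average it by amenability into a right-invariant one; via the bijection between $\frak S_{\rm inv}(G)^+$ and $B(G)^+$ recalled just after Theorem~\ref{th_schur}, the averaged multiplier will correspond to a positive definite $v\in B(G)^+$ with $v|_E=u$.

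The first step is to exploit chordality. Equipping $G$ with counting measure places us in the setting of Section~\ref{s_Bell2}, and by Lemma~\ref{l_grch} the graph $E^*\subseteq G\times G$ is chordal. For any finite $F\subseteq G$, the induced subgraph $E^*_F:=E^*\cap(F\times F)$ is again chordal, and $\nph_F:=N(u)|_{E^*_F}$ is a finite-support function (hence trivially a Schur multiplier). Positive definiteness of $u$ on $E$ ensures that, whenever $\alpha\subseteq F$ satisfies $\alpha\times\alpha\subseteq E^*_F$ (equivalently $\alpha\alpha^{-1}\subseteq E$), the matrix $(u(ts^{-1}))_{s,t\in\alpha}$ is positive, so $\nph_F$ is partially positive. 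The finite-dimensional case of Theorem~\ref{thm:op-valued} then provides a positive extension $\tilde\nph_F\colon F\times F\to\bb C$, which, extended by zero off $F\times F$, yields $\tilde\psi_F\in\frak S(G)^+$. Since $\tilde\psi_F(s,s)\in\{0,u(e)\}$, we have the uniform bound $\|S_{\tilde\psi_F}\|_{\cb}\le u(e)$. A point--weak* cluster point of the net $(S_{\tilde\psi_F})_F$, obtained via \cite[Theorem~7.4]{Pa} exactly as in the proof of (iii)$\Rightarrow$(i) of Theorem~\ref{thm:op-valued}, is a completely positive $\cl D$-bimodule map, hence of the form $S_\psi$ for some $\psi\in\frak S(G)^+$; tracking the action on matrix units shows $\psi|_{E^*}=N(u)$.

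The remaining step is to average $\psi$ into a right-invariant positive Schur multiplier still extending $N(u)$. Let $M$ be a right-invariant mean on $\ell^\infty(G)$ (which exists since $G$ is amenable), and set $\psi'(s,t):=M_r(\psi(sr,tr))$. The right-invariance of $M$ makes $\psi'$ translation-invariant, and for $(s,t)\in E^*$ the map $r\mapsto\psi(sr,tr)$ is constantly $u(ts^{-1})$, so $\psi'|_{E^*}=N(u)$. To see that $\psi'$ is a positive Schur multiplier, I would approximate $M$ by a Day net of finitely supported probability measures $\nu_\alpha$ on $G$; each
\[ \psi'_\alpha(s,t):=\sum_r\nu_\alpha(r)\psi(sr,tr) \]
is a convex combination of right-translates of $\psi$, and each right-translate is a positive Schur multiplier of the same $\cb$-norm as $S_\psi$, since right translation is implemented by unitary conjugation on $\cl B(H)$. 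A further application of \cite[Theorem~7.4]{Pa} produces a weak*-cluster point $\psi'\in\frak S_{\rm inv}(G)^+$, and writing $\psi'=N(v)$ then gives the desired $v\in B(G)^+$ with $v|_E=u$.

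The expected main obstacle is the amenability-averaging step: one must simultaneously preserve positivity, complete boundedness, the extension property on $E^*$, and produce genuine translation invariance in the limit, which requires the weak*-compactness of bounded positive Schur multipliers to interact correctly with Day's approximation of $M$.
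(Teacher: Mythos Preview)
Your argument is correct and follows the same two-stage architecture as the paper: use chordality of $E^*$ (Lemma~\ref{l_grch}) to extend $N(u)$ to a positive Schur multiplier on all of $G\times G$, then use amenability to replace this by a right-invariant positive Schur multiplier, which corresponds via $N$ to the desired element of $B(G)^+$.

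The differences are in how each stage is executed. For the first stage, the paper checks directly that $N(u)$ is a bounded, partially positive Schur multiplier on $E^*$ and then cites Theorem~\ref{thm:op-valued} once; you instead rerun the finite-approximation and cluster-point argument that underlies that theorem. For the second stage, the paper simply invokes Proposition~\ref{p_proj}, whose proof averages against Lau's positive-type approximate identity for $A(G)$ (Lemma~\ref{l_lau}) and passes to a weak* cluster point in $B(G)$, landing directly in $B(G)^+$; you instead average against a right-invariant mean, approximated by finitely supported probability measures, and pass to a cluster point in $\frak{S}(G)^+$. Both routes exploit amenability, preserve positivity, and keep the values on $E^*$ fixed; the paper's is shorter because it lands in $B(G)^+$ without a separate verification of invariance, while yours makes the invariance argument explicit and sidesteps the Lebesgue-density hypothesis built into Proposition~\ref{p_proj} (trivial in the discrete case anyway). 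One small imprecision: the net $(\nu_\alpha)$ you want is one converging weak* to the mean $M$ in $\ell^\infty(G)^*$, so that $\psi'_\alpha(s,t)\to\psi'(s,t)$ pointwise and the point--weak* cluster point of $(S_{\psi'_\alpha})$ really is $S_{\psi'}$; a generic ``Day net'' need not have this property, though such a net certainly exists since $M$ lies in the weak* closure of the finitely supported probability measures.
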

\begin{proof}
Let $u : E\to\bb{C}$ be positive definite and $\nph(s,t) = N(u)(s,t) = u(ts^{-1})$, $(s,t)\in E^*$.
We note that $\nph$ is a Schur multiplier. 
Indeed, for every finite set $F\subseteq G$, let $\nph_F$ be the restriction of $\nph$
to $E^* \cap (F\times F)$. Then the map $S_{\nph_F}$ acts on $\cl S(E^*\cap (F\times F))$. 
Since $E$ is chordal, $\nph_F$ has a positive extension to a Schur multiplier $\psi_F$ on 
$F\times F$, and hence 
$$\|S_{\nph_F}\| \leq \|S_{\psi_F}\| = \|S_{\psi_F}(I)\| = \max_{s\in F}|\nph_F(s,s)| = u(e).$$ 
It follows that the Schur product by $\nph$ is a well-defined map from $\cl S(E^*)$ 
into $\cl B(H)$, and hence $\nph$ is a Schur multiplier. 

Note that, if $\alpha\subseteq G$ then $\alpha\times\alpha \subseteq E^*$ 
if and only if $\alpha \alpha^{-1}\subseteq E$. The positive definiteness of $u$ now implies
that $\nph$ is a partially positive Schur multiplier. 

By Lemma \ref{l_grch} and Theorem~\ref{thm:op-valued}, $\nph$ has an extension to a positive Schur multiplier
$\psi : G\times G\to \bb{C}$. By Proposition~\ref{p_proj}, there exists a 
positive definite extension $v$ of $u$. 
\end{proof}

As an illustration of Theorem \ref{th_chs}, note that every 
symmetric arithmetic progression $E\subseteq \bb{Z}$ containing $0$
is a chordal set; thus, Theorem \ref{th_chs} applies and gives the well-known 
fact that every positive definite function $u : E\to\bb{C}$ has a positive definite extension 
to a function $v : \bb{Z}\to \bb{C}$ (see \cite[Theorem 4.8]{g}).

\bigskip

\noindent {\bf Acknowledgements} 
We wish to thank Yemon Choi for bringing the reference \cite{bt}
to our attention. 
We also thank the anonymous referee for the careful reading of the manuscript and his/her 
useful suggestions.

\end{document}